\DeclareTextFontCommand{\textcyr}{\fontencoding{OT2}
    \fontfamily{wncyr}\fontseries{m}\fontshape{n}\selectfont}
\theoremstyle{plain}
\newtheorem{theorem}{Theorem}
\newtheorem{proposition}[theorem]{Proposition}
\newtheorem{lemma}[theorem]{Lemma}
\newtheorem{corollary}[theorem]{Corollary}
\newtheorem{conditional-result}[theorem]{Conditional Result}
\newtheorem{theorem?}{Theorem(?)} [section]
\newtheorem{proposition?}[theorem]{Proposition(?)}
\newtheorem{lemma?}[theorem]{Lemma(?)}
\newtheorem{corollary?}[theorem]{Corollary(?)}
\newtheorem*{theorem*}{Theorem}
\newtheorem*{proposition*}{Proposition}
\newtheorem*{lemma*}{Lemma}
\newtheorem*{corollary*}{Corollary}
\newtheorem*{question*}{Question}
\newtheorem*{conjecture*}{Conjecture}
\newtheorem*{claim*}{Claim}
\newtheorem*{introtheorem*}{Theorem}
\newtheorem*{introproposition*}{Proposition}
\newtheorem*{introlemma*}{Lemma}
\newtheorem*{introcorollary*}{Corollary}
\theoremstyle{definition}
\newtheorem{example}[theorem]{Example}
\newtheorem*{definition*}{Definition}
\newtheorem*{example*}{Example}
\theoremstyle{remark}
\newtheorem{remark}[theorem]{Remark}
\newtheorem*{remark*}{Remark}
\DeclareSymbolFont{rsfs}{U}{rsfs}{m}{n}
\DeclareSymbolFontAlphabet{\mathcal}{rsfs}
\DeclareTextFontCommand{\textcyr}{\fontencoding{OT2}
    \fontfamily{wncyr}\fontseries{m}\fontshape{n}\selectfont}
\newcommand{\Sh}{\textcyr{Sh}}
\newcommand{\ZZ}{{\mathbb{Z}}}
\newcommand{\QQ}{{\mathbb{Q}}}
\newcommand{\Gal}{{\rm Gal}}
\newcommand{\Pic}{{\rm Pic}}
\newcommand{\Br}{{\rm Br}}
\newcommand{\Hom}{{\rm Hom}}
\newcommand{\Ker}{\textup{Ker}}
\newcommand{\Res}{{\rm Res}}
\newcommand{\Cor}{{\rm Cor}}
\def\G{{\mathbb{G}}}
\newcommand{\R}{{\textup{R}}}
\def\Z{{\ZZ}}
\def\Q{{\QQ}}
\def\D{{\mathbf{D}}}
\def\A{\mathbf{A}}
\begin{document}

\title{The unramified Brauer group of norm one tori}

\author{ Dasheng Wei}
\address{Academy of Mathematics and System Science, CAS, Beijing
  100190, P.R.China and Mathematisches Institut, Ludwig-Maximilians-Universit\"at M\"unchen,
  Theresienstr. 39, 80333 M\"unchen, Germany}

\email{dshwei@amss.ac.cn}

\date{\today}

%

\begin{abstract}
Let $k$ be a number field and $K/k$ Galois. We transform the construction of the unramified Brauer group of the norm one torus $\R^{1}_{K/k}(\G_m)$ into the construction of a special abelian extension over $K$. If $k=\Q$ and $K/\Q$ biquadratic, we explicitly construct the  unramified Brauer group of $\R^{1}_{K/\Q}(\G_m)$.
\end{abstract}

\subjclass[2010]{11G35 (14F22,14G05)}
\keywords{Brauer group, Brauer-Manin obstruction, norm one torus, torsor. }

\bigskip


\bigskip

\maketitle

\section{Introduction} \label{sec.notation}

Let $T$ be a torus over a field $k$ of characteristic zero,
$X$ a principal homogeneous space of $T$, and $X^c$ a smooth compactification
of $X$. Since the Brauer group $\Br(X^c) :=
H^2_{\acute{e}t}(X^c,\G_m)$ is a birational invariant of smooth proper varieties, it does not depend on the choice of $X^c$ but
only depends on $X$; it is called the unramified
Brauer group of $X$. Let $\Br_0(X^c)$ be the image of the natural map $\Br(k)\rightarrow \Br(X^c)$.
Formulas for $\Br (X^c)/\Br_0(X^c)$ can be found in  \cite{CTK98}. In particular, if $K/k$ is a Galois extension and $T=\R^1_{K/k}(\G_m)$ its norm one torus, then $\Br (X^c)/\Br_0 (X^c)\cong H^3(\Gal(K/k),\Z)$. If $K/k$ is cyclic or of prime degree (not necessarily Galois),
then $\Br (X^c)=\Br_0 (X^c)$.

It is well known that the Brauer-Manin obstruction to the Hasse principle and weak approximation for rational points is the only one on $X^c$ (\cite{San81}). To compute the Brauer-Manin obstruction, one needs to construct the Brauer group. Recently, Colliot-Th\'el\`ene (\cite{CT11}) gave an explicit construction for a multi-norm torus of dimension 5. However, for general tori, it is still open, even for the norm one torus $R_{K/\Q}^1(\G_m)$, where $K/\Q$ biquadratic.

The main aim of this article is to construct the unramified Brauer group for the torus $\R^1_{K/k}(\G_m)$ when $K/k$ is Galois and $H^3(k,\Z)=0$, e.g., $H^3(k,\Z)=0$ holds for any number field $k$. In \S \ref{main theorem}, we show any element in the unramified Brauer group of $\R^1_{K/k}(\G_m)$ has a form from cup-product. Furthermore, we transform the construction of the unramified Brauer group into the construction of a special abelian extension over $K$ (see Theorem \ref{br}). Some applications and examples are also given in this section. In \S \ref{biquadratic}, using results of double coverings of $\Q^{ab}/\Q$ in \cite{And,Das,YZ}, we give the explicit construction of the unramified Brauer group for the torus $R^1_{K/\Q}(\G_m)$, where $K/\Q$ biquadratic.

\section{The Brauer group of $\R^{1}_{K/k}(\G_m)$ when $K/k$ is Galois }\label{main theorem}

Let $k$ be a field with $char(k)=0$. Let $K/k$ be a finite extension of degree $m$ and $T=\R^{1}_{K/k}(\G_m)$
its norm one torus. Suppose $K=k(\omega)$, then $\{1,\omega,\cdots,\omega^{m-1}\}$ is a basis of $K$ as a $k$-vector space. Let $\Xi=x_0+x_1\omega+\cdots+x_{m-1}\omega^{m-1}$, where $x_0,\cdots,x_{m-1}$ are variables. Let $X\subset \A_k^{m}$ be the affine $k$-variety defined by $N_{K/k}(\Xi)=\alpha\in k^\times$ ($\alpha$ is fixed), which is a principal homogeneous space of $T$. Then $\bar k[X]^\times/\bar k^\times\cong \widehat T$ as $\Gal(\bar k/k)$-modules, where $\bar k[X]^\times$ is the group of invertible regular functions of $\overline{X}$ and $\widehat T$ is the character group of $T$.

Let $X^c$ be a smooth compactification of $X$. Let $\Br(X)$ (resp. $\Br(X^c)$) be the Brauer group of $X$ (resp. $X^c$) and $\Br_0(X)$ (resp. $\Br_0(X^c)$) the image of $\Br(k)$ in $\Br(X)$ (resp. $\Br_0(X^c)$).

\begin{lemma}\label{lemma:contain} $\Br(X^c)$ is contained in the image of $H^2(k,\bar k[X]^\times)$ in $\Br(X)$.
\end{lemma}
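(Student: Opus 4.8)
The plan is to identify $\Br(X^c)$ with its image under the restriction map $\Br(X^c)\to\Br(X)$ and to show that this image lands inside $\Ker[\Br(X)\to\Br(\overline X)]$. By the isomorphism (\ref{f-1}) the latter kernel is exactly the image of $H^2(k,\bar k[X]^\times)$ in $\Br(X)$, so this containment is precisely the assertion of the lemma.

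First I would recall why $\Br(X^c)\to\Br(X)$ is injective, so that regarding $\Br(X^c)$ as a subgroup of $\Br(X)$ is legitimate. Both $X$ and $X^c$ are smooth integral $k$-varieties with the same function field $k(X)$, and for a regular integral scheme the Brauer group injects into the Brauer group of its generic point (Grothendieck). Hence $\Br(X^c)$ and $\Br(X)$ both embed into $\Br(k(X))$ compatibly with restriction, and in particular $\Br(X^c)\hookrightarrow\Br(X)$.

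The heart of the argument is a base-change diagram chase. For $\alpha\in\Br(X^c)$ I would consider the commutative square of functorial maps with nodes $\Br(X^c)$, $\Br(X)$, $\Br(\overline{X^c})$, $\Br(\overline X)$, in which the horizontal maps are restriction along the open immersions $X\hookrightarrow X^c$ and $\overline X\hookrightarrow\overline{X^c}$, and the vertical maps are base change to $\bar k$. Pushing $\alpha$ down, its image in $\Br(\overline{X^c})$ vanishes, since $X^c$ is geometrically rational and hence $\Br(\overline{X^c})=0$. By commutativity of the square, the image of $\alpha$ in $\Br(\overline X)$, which may also be computed by first restricting to $X$ and then base-changing, is likewise zero. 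Therefore the restriction of $\alpha$ to $X$ lies in $\Ker[\Br(X)\to\Br(\overline X)]$.

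Combining the two steps, every element of $\Br(X^c)$, viewed inside $\Br(X)$, lies in $\Ker[\Br(X)\to\Br(\overline X)]$, which by (\ref{f-1}) coincides with the image of $H^2(k,\bar k[X]^\times)$. I do not expect a serious obstacle: the only substantive inputs are the vanishing $\Pic(\overline X)=0$ (already used to produce (\ref{f-1})) and the geometric rationality yielding $\Br(\overline{X^c})=0$. The one point to handle with care is the commutativity of the base-change/restriction square together with the identification of $\Br(X^c)$ with its image in $\Br(X)$; both are formal consequences of the functoriality of \'etale cohomology.
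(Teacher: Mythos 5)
Your proof is correct and follows exactly the route the paper intends: the paper's one-line justification (geometric rationality gives $\Br(\overline{X^c})=0$, hence $\Br(X^c)\subset\Ker[\Br(X)\to\Br(\overline X)]$, which is the image of $H^2(k,\bar k[X]^\times)$ by (\ref{f-1})) is just your argument stated tersely. You have merely filled in the standard details (injectivity of $\Br(X^c)\to\Br(X)$ via the function field and commutativity of the base-change square), which is fine.
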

\begin{proof}
By the Hochschild-Serre Spectral sequence, we have
$$\aligned \Pic(\overline X)^{\Gal(\bar k/k)} \rightarrow H^2(k,\bar k[X]^\times)
\rightarrow \Ker[\Br(X)\rightarrow \Br(\overline{X})]\rightarrow H^1(k,\Pic(\overline X)).
\endaligned$$
Since $\overline X\cong \G_m^d$ over $\bar k$, we have $\Pic(\overline X)=0$, where $d=[K:k]-1$. Therefore
\begin{equation} \label{f-1} H^2(k,\bar k[X]^\times)\cong \Ker[\Br(X)\rightarrow \Br(\overline{X})].\end{equation}
Since $X^c$ is geometrically rational, we have $\Br(\overline{X^c})=0$. Therefore
$$\Br(X^c)\subset \Ker[\Br(X)\rightarrow \Br(\overline{X})].\qedhere $$
\end{proof}

Let $\Gamma_k=\Gal(\bar k/k)$ and $\Gamma_K=\Gal(\bar k/K)$. Let $\Z[K/k]$ denote the group ring $\Z[\Gamma_k/\Gamma_K]$. For $i\geq 0$, we have the cup product $$(\cdot\ , \cdot\ ):\Z[K/k]\times H^i(K,\Z) \rightarrow H^i(K,\Z[K/k]).$$
Let $\Cor_{K/k}$ be the corestriction map $H^i(K,\cdot\ )\rightarrow H^i(k,\cdot\ ).$
\begin{lemma}\label{sha} Let $\Gamma_K\in \Gamma_k/\Gamma_K$. Then $\Cor_{K/k}(\Gamma_K, \cdot\ ): H^i(K,\Z)\rightarrow H^i(k,\Z[K/k])$ is the inverse map of Shapiro's isomorphism $sh: H^i(k,\Z[K/k]) \rightarrow H^i(K,\Z)$.
\end{lemma}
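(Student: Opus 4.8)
The plan is to recognize $Cor_{K/k}(\Gamma_K,\cdot\ )$ as the composite of two classical operations and to match it with the standard formula for the inverse of Shapiro's isomorphism. Write $G=\Gamma_k$ and $H=\Gamma_K$, so that $\Z[K/k]=\mathrm{Ind}_H^G\Z$ is the permutation module on $G/H$ with basis $\{e_{gH}\}$. First I would observe that the element $\Gamma_K\in\Z[K/k]$ is the basis vector $e_{H}$ of the trivial coset, that it is fixed by $H$, and hence defines a class in $H^0(K,\Z[K/k])$. Since cupping with a class of degree $0$ is induced by a module homomorphism, the map $(\Gamma_K,\cdot\ )\colon H^i(K,\Z)\to H^i(K,\Z[K/k])$ coincides with $\iota_*$, where $\iota\colon\Z\to\Z[K/k]$ is the $H$-equivariant map $1\mapsto e_{H}$. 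Dually, Shapiro's isomorphism is realized as $sh=\pi_*\circ Res^G_H$, where $\pi\colon\Z[K/k]\to\Z$ is the $H$-equivariant projection onto the $e_{H}$-coordinate, so that $\pi\circ\iota=\mathrm{id}_\Z$. With these identifications the assertion becomes the single identity $sh\circ\big(Cor_{K/k}\circ\iota_*\big)=\mathrm{id}$ on $H^i(K,\Z)$.

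Rather than expand $Res^G_H\circ Cor^G_H$ via the Mackey double-coset formula, I would argue by universality of $\delta$-functors. The key observation is that $Cor_{K/k}\circ\iota_*$ is a morphism of cohomological $\delta$-functors $H^*(H,-)\Rightarrow H^*(G,\mathrm{Ind}_H^G(-))$ in the module variable: the functor $\mathrm{Ind}_H^G$ is exact and $\iota$ is a natural transformation, so $\iota_*$ commutes with the connecting homomorphisms, and corestriction commutes with connecting homomorphisms as well. Since Shapiro's isomorphism $sh$ is an isomorphism of $\delta$-functors $H^*(G,\mathrm{Ind}_H^G(-))\Rightarrow H^*(H,-)$, the composite $sh\circ(Cor_{K/k}\circ\iota_*)$ is an endomorphism of the $\delta$-functor $H^*(H,-)$.

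Next I would compute in degree $0$. Choosing left coset representatives $g_1=1,g_2,\dots,g_n$ of $G/H$ with $n=[K:k]$, corestriction on $H^0$ is the transfer $m\mapsto\sum_j g_j m$. For $n\in\Z^H=\Z$ this gives $Cor_{K/k}(\iota(n))=\sum_j g_j\otimes n=\sum_j e_{g_jH}\,n$, whose $e_{H}$-coordinate equals $n$ because $g_jH=H$ only for $j=1$; hence $sh^0\big(Cor_{K/k}(\iota(n))\big)=\pi\big(\sum_j e_{g_jH}\,n\big)=n$. Therefore $sh\circ(Cor_{K/k}\circ\iota_*)$ is the identity in degree $0$.

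Finally, because $\{H^i(H,-)\}_{i\ge 0}$ is a universal (effaceable) cohomological $\delta$-functor on discrete $H$-modules — every module embeds into an induced module, killing higher cohomology — a morphism of $\delta$-functors out of it is uniquely determined by its degree-$0$ component. Consequently $sh\circ(Cor_{K/k}\circ\iota_*)$, agreeing with the identity in degree $0$, is the identity in every degree, and since $sh$ is an isomorphism this yields $Cor_{K/k}(\Gamma_K,\cdot\ )=sh^{-1}$. I expect the main obstacle to be the bookkeeping that makes universality applicable, namely verifying that $Cor_{K/k}\circ\iota_*$ genuinely commutes with connecting maps, together with pinning down the normalizations of the cup product and of $sh$ so that the degree-$0$ computation produces the identity rather than multiplication by $[K:k]$; the profinite setting itself causes no difficulty, since $H$ has finite index in $G$ and all modules in play are discrete.
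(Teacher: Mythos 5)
Your argument is correct, but it reaches the conclusion by a different mechanism than the paper. Both you and the paper rely on the same two identifications: cup product with the degree-zero class $\Gamma_K=e_{H}$ is the map induced by the $\Gamma_K$-equivariant inclusion $\iota\colon\Z\to\Z[K/k]$, and Shapiro's map is $\pi_*\circ Res$. From there the paper proves the complementary one-sided identity $Cor_{K/k}(\Gamma_K,\cdot\ )\circ sh=\mathrm{id}$ on $H^i(k,\Z[K/k])$ by a direct computation with $\Gamma_k$-equivariant cochains: it writes out the coset-sum formula for corestriction and uses the observation that $\sum_{\gamma\Gamma_K}j_{\gamma\Gamma_K}(x)\,\gamma\Gamma_K=x$ to collapse the sum. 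You instead prove $sh\circ Cor_{K/k}(\Gamma_K,\cdot\ )=\mathrm{id}$ on $H^i(K,\Z)$ by checking it in degree zero and invoking universality (effaceability) of $H^*(\Gamma_K,-)$ as a $\delta$-functor in the module variable; either one-sided identity suffices since $sh$ is bijective, as the paper itself notes. The paper's route is self-contained and short once the explicit corestriction formula is accepted; yours avoids all cochain manipulation at the cost of setting up $Cor_{K/k}\circ\iota_*$ as a morphism of $\delta$-functors and justifying effaceability for discrete modules over a profinite group (embedding into coinduced modules), both of which you handle correctly. Your degree-zero normalization check --- that the composite is the identity rather than multiplication by $[K:k]$ --- is exactly the point that needed care, and your computation settles it.
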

\begin{proof} The case $i=0$ is obvious.
Let $i>0$. Let $g=\Cor_{K/k}(\Gamma_K, \cdot\ )$. Since $sh$ is an isomorphism (see \cite[Proposition I.1.6.3]{NSW}), it is enough to show $sh\circ g=id$. Let $C^\bullet(\Gamma_K,\Z)$ (resp. $C^\bullet(\Gamma_K,\Z[K/k])$) be the homogeneous cochain complex of $\Gamma_K$ with coefficients in $\Z$ (resp. $\Z[K/k]$) (see \cite[Chapter~1]{NSW}). Suppose $x\in H^i(K,\Z)$, choose an $i$-cocycle $u\in C^i (\Gamma_K,\Z)$ which represents $x$.

We use notations in pp. 46 of \cite[Chaper I, Section 5]{NSW}, let $c=\Gamma_K\sigma \in \Gamma_K \setminus \Gamma_k$, we choose a fix representative $\overline c\in c$ and choose $\overline c$ to be the identity if $c= \Gamma_K\in \Gamma_K\setminus \Gamma_k$. For any $(\sigma_0,\dots,\sigma_i)\in \oplus_{i+1}\Gamma_K$, then $$\aligned g(u)(\sigma_0,\dots, \sigma_i)&=\Cor_{K/k}(\Gamma_K, u)(\sigma_0,\dots, \sigma_i)\\
&=\sum_{c\in \Gamma_K\setminus \Gamma_k}u(\bar c \sigma_0 \overline{c\sigma}_0^{-1},\dots,\overline c \sigma_i \overline{c\sigma}_i^{-1})\overline c^{-1} \Gamma_K.\endaligned$$
Let $j_{\Gamma_K}$ be the projection $\Z[K/k]\rightarrow \Z$ by $\Gamma_K  \mapsto 1, \gamma \Gamma_K \mapsto 0 \text{ if } \gamma\not \in \Gamma_K$.
Since $\sigma_i \in \Gamma_K$ for any $i$, we have $\Gamma_K=\Gamma_K \sigma_i \in \Gamma_K\setminus \Gamma_k$, hence $\overline{c\sigma}_i$ is the identity for $c=\Gamma_K\in \Gamma_K \setminus \Gamma_k$. Therefore
$$\aligned (sh\circ g(u))(\sigma_0,\dots,\sigma_i)&=j_{\Gamma_K}\left(\sum_{c\in \Gamma_K\setminus \Gamma_k}u(\bar c \sigma_0 \overline{c\sigma}_0^{-1},\dots,\overline c \sigma_i \overline{c\sigma}_i^{-1})\overline c^{-1} \Gamma_K\right)\\
&=u(\sigma_0,\dots,\sigma_i). \qedhere
\endaligned$$
\end{proof}

Let $\Xi$ be as in the beginning of this section, we can see $\Xi\in \bar k[X]^\times$. For any character $\chi$ of $\Gamma_K$, it is a homomorphism from $\Gamma_K$ to $\Q/\Z$, which gives rise to an element of $H^1(K,\Q/\Z)$ and then $H^2(K,\Z)$. Then the cup-product $(\Xi,\chi)$ gives an element of $H^2(K,\bar k[X]^\times)$. Let $\Cor_{K/k}: H^2(K,\bar k[X]^\times)\to H^2(k,\bar k[X]^\times)$ be the corestriction.  Then $(\Xi,\chi)$ gives the element $\Cor_{K/k}(\Xi,\chi)\in H^2(k,\bar k[X]^\times)\subset \Br(X)$. The following Proposition will show any element of $\Br(X^c)/\Br_0(X^c)$ has this form.
\begin{proposition}\label{form} Let $k$ be a field of characteristic $0$ and let $\chi$ be a character of $\Gamma_K$.
\begin{enumerate}[(a)]
\item The element $\Cor_{K/k}(\Xi,\chi)=0\in \Br(X)/\Br_0(X)$ if and only if $\chi$ is the restriction of a character of $\Gamma_k$.
\item Suppose $H^3(k,\Z)=0$, e.g., $k$ is a number field or $p$-adic number field (see \cite[Corollary 4.7]{Milne86}). Each element of $\Br(X)/\Br_0(X)$ in the image of $H^2(k,\bar k[X]^\times)$ is of the form $\Cor_{K/k}(\Xi,\chi)$.
\end{enumerate}
\end{proposition}

\begin{proof}
Using the natural exact sequence $$0\rightarrow  \Z\rightarrow
\Z[K/k]\rightarrow \widehat{T}\rightarrow 0,$$ we deduce the following exact sequence
\begin{equation}\label{cor}
H^2(k,\Z)\rightarrow H^2(k,\Z[K/k])\buildrel f \over \rightarrow
H^2(k,\widehat{T})\rightarrow H^3(k,\Z).
\end{equation}

Define the $\Gamma_k$-morphism $j:\Z[K/k]\rightarrow \bar k[X]^\times$ by $\Gamma_K\mapsto \Xi$.
Then we have the maps $\Z[K/k]\rightarrow \bar k[X]^\times\rightarrow \bar k[X]^\times /\bar k^\times \ (\cong \widehat T)$.
Therefore we have the induced map $H^2(k,\Z[K/k])\to H^2(k,k[X]^\times) \xrightarrow{g} H^2(k,\widehat T)$, and the composite map $H^2(k,\Z[K/k])\rightarrow H^2(k,\widehat T)$ is coincide with the map $f$ in (\ref{cor}).

By the basic exact sequence $0\rightarrow \bar k\rightarrow \bar k[X]^\times\rightarrow \bar k[X]^\times/\bar k^\times (\cong \widehat T)\rightarrow 0$, we have the following exact sequence
\begin{equation}\label{br-basic}
\Br(k) \rightarrow H^2(k,\bar k[X]^\times) \xrightarrow{g} H^2(k,\widehat{T}).
\end{equation}
Hence $g$ induces an injection $g': H^2(k,\bar k[X]^\times)/\text{Ima}[\Br(k)] \hookrightarrow H^2(k,\widehat{T})$, where $\text{Ima}[\Br(k)]$ is the image of $\Br(k)$ in $H^2(k,\bar k[X]^\times)$. We have the following commutative diagram
\begin{equation}\label{e-1-1} \xymatrix @R=12 pt @C=12pt @M=4pt{
H^2(k,\Z[K/k]) \ar[r] \ar[dr]_{f}  &  H^2(k,\bar k[X]^\times)/\text{Ima}(\Br(k))\ar[d]^{g'}\\
& H^2(k,\widehat{T}).}
\end{equation}
Since $g'$ is injective, by diagram (\ref{e-1-1}), the exact sequence (\ref{cor}) gives the following exact sequence
\begin{equation}\label{cor-1}
H^2(k,\Z)\rightarrow H^2(k,\Z[K/k])\rightarrow H^2(k,\bar k[X]^\times)/\text{Ima}(\Br(k)).
\end{equation}

By Lemma \ref{sha} and functoriality of cup product, we have the following commutative diagram
$$
\xymatrix @R=12 pt @C=12pt @M=4pt{
&H^2(K,\Z)\ar[d]^{g_1}\ar[dr]^{g_2}\\
H^2(k,\Z)\ar[ur]^{\Res_{k/K}} \ar[r]   & H^2(k,\Z[K/k])  \ar[r]  &H^2(k,\bar k[X]^\times),}
$$
where $g_1=\Cor_{K/k}(\Gamma_K,\cdot)$ is  the inverse of Shapiro's isomorphism (by Lemma \ref{sha}), $g_2=\Cor_{K/k}(\Xi,\cdot)$ and the lower right map is induced by $j$ with $j(\Gamma_K)=\Xi$. By this commutative diagram, (\ref{cor-1}) is identity with the following exact sequence
\begin{equation}\label{fun}
H^2(k,\Z)\buildrel \Res_{k/K} \over \rightarrow H^2(K,\Z)\buildrel  h \over \rightarrow
H^2(k,\bar k[X]^\times)/\text{Ima}[\Br(k)],
\end{equation}
where $h=\rho \circ \Cor_{K/k}(\Xi,\cdot)$ and $\rho$ is the quotient map $$H^2(k,\bar k[X]^\times)\rightarrow H^2(k,\bar k[X]^\times)/\text{Ima}[\Br(k)].$$

Since the map $H^2(k,\bar k[X]^\times)\rightarrow \Br(X)$ is injective by  (\ref{f-1}), the map $$H^2(k,\bar k[X]^\times)/\text{Ima}[\Br(k)]\rightarrow \Br(X)/\Br_0(X)$$ is also injective.
By the exact sequence $(\ref{fun})$, we immediately have $\Cor_{K/k}(\Xi,\chi)$ is zero in $\Br(X)/\Br_0(X)$ if and only if $\chi$ is the restriction of a character of $\Gamma_k$. Then we prove part (a).

Now we prove part (b). Since $H^3(k,\Z)=0$, the map $f$ in sequence (\ref{cor}) is surjective.
The following  diagram is commutative
\begin{equation*} \xymatrix @R=12 pt @C=12pt @M=4pt{
H^2(k,\Z[K/k]) \ar[r] \ar[dr]_{f}  &  H^2(k,\bar k[X]^\times)\ar[d]^{g}\\
& H^2(k,\widehat{T}).}
\end{equation*}
Then $g$ is also surjective since $f$ is surjective. The sequence (\ref{br-basic}) induces  $f': H^2(k,\bar k[X]^\times)/\text{Ima}[\Br(k)] \rightarrow H^2(k,\widehat{T})$ is an isomorphism.  By the sequence (\ref{cor}) and diagram (\ref{e-1-1}), the second map in (\ref{cor-1}) is also surjective, hence the map $h$ in (\ref{fun}) is also surjective. Therefore  each element of $H^2(k,\bar k[X]^\times)/\text{Ima}[\Br(k)]$ is of the form $\Cor_{K/k}(\Xi,\chi)$ by (\ref{fun}), where $\chi$ is a character of $\Gamma_K$. Therefore each element of $\Br(X)/\Br_0(X)$ in the image of $H^2(k,\bar k[X]^\times)$ is of the form $\Cor_{K/k}(\Xi,\chi)$.
\end{proof}

In the following we will give the description of the unramified Brauer group of $X$. First we give some notation.

Suppose $K/k$ is Galois. {\it We say a field extension $L/K$ satisfies the condition $(*)$ over $k$ if:
$$L/K'\text{ is abelian for any subfield }
K'\subset K \text{ containing } k \text{ and satisfying } K/K' \text{ is cyclic.}$$}
Obviously $L/K$ is abelian if $L/K$ satisfies the condition $(*)$ over $k$.

\begin{lemma} \label{cen} Suppose $K/k$ is Galois and $L/K$ satisfies the condition $(*)$ over $k$. Then $L/k$ is Galois and $\Gal(L/K)$ is contained in the center of $\Gal(L/k)$.
\end{lemma}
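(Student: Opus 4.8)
The plan is to prove both assertions at once by the same device: given an element of the relevant Galois group, restrict it to $K$, pass to the cyclic subgroup of $\Gal(K/k)$ it generates, and feed the corresponding fixed field into condition $(*)$.

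First I would show $L/k$ is normal (separability being automatic since $\mathrm{char}(k)=0$). Fix $\sigma\in\Gamma_k$ and let $\bar\sigma=\sigma|_K\in\Gal(K/k)$. Put $F=K^{\langle\bar\sigma\rangle}$, the fixed field of the cyclic group $\langle\bar\sigma\rangle$; then $k\subseteq F\subseteq K$ and $K/F$ is cyclic, so by $(*)$ the extension $L/F$ is abelian, in particular normal. Since $\bar\sigma$ fixes $F$ pointwise, $\sigma$ lies in $\Gamma_F=\Gal(\bar k/F)$, and normality of $L/F$ forces $\sigma(L)=L$. As $\sigma\in\Gamma_k$ was arbitrary, every $k$-embedding of $L$ into $\bar k$ carries $L$ onto itself, so $L/k$ is normal, hence Galois.

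Next, writing $\mathcal G=\Gal(L/k)$ and $H=\Gal(L/K)$ (a normal subgroup of $\mathcal G$ since $K/k$ is Galois), I would show $H$ is central. Take $g\in\mathcal G$ and $h\in H$, and again set $F=K^{\langle g|_K\rangle}$, so that $k\subseteq F\subseteq K$, the extension $K/F$ is cyclic, and $L/F$ is abelian by $(*)$. Since $g$ fixes $F$ and $h$ fixes all of $K\supseteq F$, both $g$ and $h$ lie in $\Gal(L/F)$, which is abelian; hence $gh=hg$. As $g$ and $h$ were arbitrary, $H\subseteq Z(\mathcal G)$, which is the second claim.

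The only point that needs care is the reading of $(*)$: the hypothesis ``$L/F$ is abelian'' must be understood as asserting in particular that $L/F$ is Galois/normal, which is exactly the input both steps require. The main (and essentially the only) idea is the fixed-field trick; once one sees that each individual automorphism is controlled by the cyclic quotient it induces on $K$, both the normality of $L/k$ and the centrality of $\Gal(L/K)$ fall out with no computation. I expect no serious obstacle beyond verifying that the chosen $F$ indeed satisfies $k\subseteq F\subseteq K$ with $K/F$ cyclic, which is clear because $\bar\sigma$ (resp.\ $g|_K$) is a $k$-automorphism of $K$.
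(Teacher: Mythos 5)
Your proof is correct and is essentially identical to the paper's: both arguments take the fixed field $F=K^{\langle\sigma|_K\rangle}$ (written $K^\sigma$ in the paper), invoke condition $(*)$ to get that $L/F$ is abelian, and deduce normality of $L/k$ and centrality of $\Gal(L/K)$ from the fact that the relevant automorphisms all lie in $\Gal(L/F)$. Your explicit remark that ``abelian'' includes ``Galois/normal'' is the same reading the paper uses implicitly.
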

\begin{proof} We will show
$\sigma(L)=L$ for any $\sigma \in \Gamma_k$. Let $K^{\sigma}$
be the fixed subfield of $\sigma$ in $K$. Since $K/K^\sigma$ is
cyclic, we have $L/K^{\sigma}$ is abelian by the condition $(*)$ over $k$. Since $\sigma \in \Gamma_{K^\sigma}$, it implies
$\sigma(L)=L$. Therefore $L/k$ is Galois.

Let $\sigma\in \Gal(L/K)$. For any $ g\in \Gal(L/k)$, let $K^g$ be the subfield of $K$ fixed by $g$. Hence $K/K^g$ is cyclic. By the condition $(*)$ over $k$, $\Gal(L/K^g)$ is abelian. This implies $\sigma g=g\sigma$ since $\sigma, g\in \Gal(L/K^g)$, hence $\sigma$ is contained in the center of $\Gal(L/k)$.
\end{proof}

\begin{theorem} \label{br} Let $k$ be a field of characteristic $0$. Suppose $K/k$ is Galois. Let $\chi$ be a character of $\Gamma_K$. Then:
\begin{enumerate}[(a)]

\item $\Cor_{K/k}(\Xi,\chi)\in \Br(X^c)$ if and only if $\chi$ can factor through an abelian extension $L/K$ which satisfies the condition $(*)$ over $k$.

\item If $H^3(k,\Z)=0$, then any element of $\Br(X^c)/\Br_0(X^c)$ is of the form $\Cor_{K/k}(\Xi,
\chi)$.

\end{enumerate}
\end{theorem}
\begin{proof}
Part (b) follows from Lemma \ref{lemma:contain} and Proposition \ref{form}. So it remains to prove part (a).

Let $k(X)$ be the function field of $X$. Let $A$ be a discrete
valuation ring containing $k$ with fraction field $k(X)$ and residue field $\kappa_A$. There
is a residue map $$\partial_A: \Br(k(X))\rightarrow
H^1(\kappa_A,\Q/\Z).$$ By Grothendieck's purity theorem, we have
$$\Br(X^c)=\bigcap_{A}\Ker(\partial_A)\subset \Br(k(X)),$$
where $A$ runs through all discrete valuation rings as above.

Suppose $L/K$ satisfies the condition $(*)$ over $k$. Let $\chi$ be a character of $\Gamma_K$ which factors through $\Gal(L/K)$. Let $B=\Cor_{K/k}(\Xi, \chi)\in \Br(X)$. We want to prove
$B\in \Br(X^c)$. Therefore we only need to prove the triviality of
$\partial_A(B)$ for any such discrete valuation ring $A$, $i.e.$, $\partial_A(B)(g)=0$ for any $g \in
\Gal(\bar \kappa_A/\kappa_A)$. 

We extend the embedding $k \subset
\kappa_A$ to an embedding $\bar k \subset \bar \kappa_A$, so that $g$
acts also on $\bar k$. Let $K^g$ be the subfield of $K$ fixed
by $g$, with cyclic Galois group $\Gal(K/K^g)$.

Since $X$ is geometrically integral, the function field $K(X)
\cong k(X) \otimes_k K$ of $X\times_k K$ is finite over $k(X)$. Since $k\subset A$ and $K/k$ is finite (and etale), $A\otimes_k K$ is finite etale over $A$. We can extend $A$ to a discrete valuation
ring $A_{K}$ of $K(X)=\text{Frac}(A\otimes_k K)$ with residue field
$\kappa_{A_{K}}=\kappa_A . K$. Indeed, the completion of
$k(X)$ for the given valuation is isomorphic to $\kappa_A((\pi))$,
where $\pi$ is a uniformizer. Considering the valuation given by $\pi$
on $(K . \kappa_A)((\pi))$ and using $K(X) = k(X) \otimes_k
K$, we see that the completion $\widehat{K(X)}$ of $K(X)$ with
respect to $A_{K}$ is $\kappa_{A_{K}}((\pi))$. This defines a
discrete valuation on $K(X)$ with valuation ring $A_{K}$.

For any intermediate field $M$ of $K/k$, we have similarly the
valuation ring $A_M$ of $M(X)$ with residue field $\kappa_{A_M}$. We
write $A_g$ for $A_{K^{g}}$.

By \cite[Proposition~1.1.1]{CTSD94}, we have the commutative
diagram
\begin{equation*}
    \begin{CD}
      \Br(k(X)) @>\partial_A>> H^1(\kappa_A,\Q/\Z)\\
      @V\Res_{k/K^{g}}VV @VV\Res_{\kappa_A/\kappa_{A_g}}V\\
      \Br(K^{g}(X)) @>\partial_{A_g}>> H^1(\kappa_{A_g},\Q/\Z).
    \end{CD}
\end{equation*}
Since $\kappa_{A_g} = \kappa_A . K^{g}$, we have $g \in
\Gal(\bar \kappa_A/\kappa_{A_g})$. Hence
\begin{equation}\label{eq:g}
    \partial_A(B)(g) = \Res_{\kappa_A/\kappa_{A_g}}(\partial_A(B))(g)
    = \partial_{A_g}(\Res_{k/K^{g}}(B))(g).
\end{equation}

Let $$G=\Gamma_k, U=\Gamma_{K^g} \text{ and } H=\Gamma_K.$$ Choose a system of representatives $\sigma$ of the double coset decomposition
\begin{equation} \label{coset} G=\bigsqcup_{\sigma}U\sigma H=\bigsqcup_{\sigma}U\sigma,
\end{equation}
since $H$ is normal in $G$.
By \cite[Proposition I.1.5.6]{NSW}, we have
$$\Res_{G/U}\circ \Cor_{H/G}=\sum_\sigma \Cor_{U\cap \sigma H\sigma^{-1}/U}\circ \sigma_*\circ \Res_{H/H\cap \sigma U\sigma^{-1}},$$
where $\sigma$ runs through all elements in (\ref{coset}). Note that $H$ is a normal subgroup of $G$ and contained in $U$. Therefore, we have
$$\Res_{G/U}\circ \Cor_{H/G}=\sum_\sigma \Cor_{H/U}\circ \sigma_*.$$
Therefore
$$\begin{aligned}
\Res_{k/K^{g}}(B)&=\Res_{G/U}\circ \Cor_{H/G}(\Xi,\chi)=\sum_\sigma \Cor_{H/U}\circ \sigma_*(\Xi,\chi)\\
&=\sum_\sigma \Cor_{H/U}(\sigma(\Xi),\chi^\sigma),
\end{aligned}$$
where $\chi^\sigma$ is the conjugate character of $\chi$ by $\sigma$, $i.e.$, $\chi^\sigma(g):=\chi(\sigma^{-1}g\sigma)\in \Q/\Z$ for any $g\in \Gamma_K$.

Since $\Gal(L/K)$ is contained in the center of $\Gal(L/k)$ by Lemma \ref{cen}, we have $$\chi^\sigma =\chi.$$
Hence
$$\Res_{k/K^{g}}(B)=\sum_\sigma \Cor_{H/U}(\sigma(\Xi),\chi).$$
Note that $K/K^g$ is cyclic, we have $\Gal(L/K^g)$ is abelian by the condition $(*)$ over $k$. Therefore, we can choose a character $\hat \chi$ of $\Gal(\bar k/K^g)$ which factors through $\Gal(L/K^g)$ and lifts $\chi$. Since $\sigma$ runs through a system of representatives of $U\backslash G$, we have
$$\aligned
\Res_{k/K^{g}}(B)&=\sum_\sigma \Cor_{H/U}(\sigma(\Xi),\Res_{U/H}(\hat\chi))=\sum_\sigma(N_{K/K^g}(\sigma(\Xi)),\hat\chi)\\
&=(N_{K/k}(\Xi),\hat\chi)=(\alpha,\hat\chi).
\endaligned$$
Obviously $$\partial_{A_g}(\Res_{k/K^{g}}(B))=v_{A_g}(\alpha)\hat \chi=0.$$
By (\ref{eq:g}), then $\partial_{A}(B)(g)=0$ for any $g\in \Gal(\bar \kappa_A/\kappa_A)$.
Therefore $B\in \Br(X^c)$.

On the other hand, suppose $\chi$ does not factor through any abelian extension over $K$ which satisfies the condition $(*)$ over $k$. In the following we will show that $B=\Cor_{K/k}(\Xi, \chi)\in \Br(X)$ does not lies in $\Br(X^c)$.

Let $L/K$ be the minimal abelian (cyclic) extension which $\chi$ factors through, which is the fixed field by the kernel of $\chi: \Gamma_K\to \Q/\Z$.  Then there exists a subfield $F\subset K$ with
$\Gal(K/F)$ cyclic, such that $L/F$ is not abelian. Let
 $f: \Br(X)\rightarrow \Br(X_F)$ be the natural map. Let $U'=\Gamma_{F}$. Choose a system of representatives $\sigma'$ of the double coset decomposition
\begin{equation} \label{coset2} G=\bigsqcup_{\sigma}U'\sigma' H.
\end{equation}
By \cite[Proposition I.1.5.6]{NSW}, similarly as above we have $$f(B)=\Res_{G/U'}\circ \Cor_{H/G}(\Xi,\chi)=\sum_{\sigma'} \Cor_{H/U'}(\sigma'(\Xi),\chi^{\sigma'}).$$
It's clear that $X_F$ is the affine variety
defined by $$\prod_{\sigma'} N_{K/F}(\sigma'(\Xi))=\alpha,$$ where $\sigma'$ runs through all elements in the equation (\ref{coset2}) and all $\sigma'(\Xi)$ are independent $K$-variables. Let $X'$ be the affine variety over $F$ defined by $N_{K/F}(\Xi')=\alpha$.
There is a closed immersion $\varphi: X' \rightarrow X_F$ defined by
$$\Xi' \mapsfrom \Xi,\text{ otherwise } 1 \mapsfrom \sigma'(\Xi) \text{ if } \sigma' \not \in \Gamma_F.$$
Hence
$$\varphi^*f(B)=\sum_\sigma \varphi^*(\Cor_{H/U'}(\sigma'(\Xi),\chi^{\sigma'}))=\Cor_{H/U'}(\Xi',\chi),$$
where $\varphi^*: \Br(X_F)\rightarrow \Br(X')$ is induced by $\varphi$.

We claim that $\chi$ is not the restriction of a character of $\Gamma_F$. Otherwise, suppose $\chi$ is the restriction of $\chi': \Gamma_F\to \Q/\Z$. Let $L'$ be the cyclic field fixed by $\Ker(\chi')$. Since $\Ker(\chi)=\Ker(\chi')\cap \Gamma_K$, we have $L=L'.K$. Since $L'$ and $K$ are abelian (cyclic) over $F$, then $L=L'.K$ is also abelian over $F$, which is impossible since $L/F$ is not abelian.
Therefore, by Proposition \ref{form}, we have $\varphi^*f(B)\neq 0 \in
\Br(X')/\Br_0(X')$. On the other hand, since $K/F$ is cyclic, it implies
$\Br(X'^c)=\Br_0(X'^c)$. Therefore $\varphi^* f(B)\not
\in \Br(X'^c)$, hence $B\not \in \Br(X^c)$.
\end{proof}

\begin{remark} Let $K_1,\cdots, K_m$ be finite field extensions of $k$ and $K=\cap_{i=1}^m K_i$. Let $Y$ be the variety over $k$ defined by $N_{K_1/k}(\Xi_1)\cdots N_{K_m/k}(\Xi_m)=\alpha$, which is a principle homogeneous space of the multinorm one torus associated with $K_1,\cdots, K_m$. Let $X$ be the variety over $k$ defined by $N_{K/k}(\Xi)=\alpha$. With some special assumptions, there is a canonical and explicit isomorphism $$\Br(X^c)/\Br_0(X^c)\ \xrightarrow{\simeq} \Br(Y^c)/\Br_0(Y^c)$$
by \cite[Theorem 6]{DW}. Therefore we can also construct $\Br(Y^c)$ using Theorem \ref{br}.
\end{remark}

Let $k$ be a field with characteristic $0$. Let $K=k(\sqrt{-1},\sqrt{d})$ be a biquadratic extension of $k$ . Let
$L=K(\sqrt[4]{d})$. It is clear that $L/k$ is Galois (non-abelian) and of degree $8$ with $\Gal(L/k)\cong \D_4$. Let $\chi$ be the unique nontrivial character of $\Gamma_K$ which factors through $\Gal(L/K)$.

\begin{corollary}\label{t-1} Let $k$ be a field with characteristic $0$. Let $K=k(\sqrt{-1},\sqrt{d})$ be a biquadratic extension of $k$. Then $\Cor_{K/k}(\Xi, \chi)$ is the unique
generator of $\Br(X^c)/\Br_0(X^c)$.
\end{corollary}
\begin{proof} Since $char(k)=0$, we have $\Br(X^c)/\Br_0(X^c)\cong \Sh_\omega^2(\widehat T)$ by \cite[Proposition 9.5]{CT/S87-1} or \cite{CTK98}. Since $K/k$ is Galois, we have $\Sh_\omega^2(\widehat T)\cong H^3(\Gal(K/k),\Z)$ by a similar argument as in \cite[Theorem 6.11]{PR94}. Therefore we have $$\Br(X^c)/\Br_0(X^c)\cong H^3(\Gal(K/k),\Z)\cong \Z/2\Z$$ by a classical computation of Schur (see for instance \cite[Corollary 2.2.12]{Ka}). 

By Theorem \ref{br} and the fact
that each group of order $4$ is abelian, we have $\Cor_{K/k}(\Xi, \chi)\in \Br(X^c)$. 
So it remains to show that $\Cor_{K/k}(\Xi, \chi)$ is nontrivial in $\Br(X^c)$. By Proposition \ref{form}, we only need to show $\chi$ is not the restriction of a character of $\Gamma_k$. Otherwise, suppose $\chi$  is the restriction of $\chi': \Gamma_k\to \Q/\Z$. Let $L'$ be the cyclic field fixed by $\Ker(\chi')$. Since $\Ker(\chi)=\Ker(\chi')\cap \Gamma_K$, we have $L=L'.K$. Since $L'$ and $K$ are abelian over $k$, it implies $L$ is also abelian over $k$, which is impossible since $\Gal(L/k)=\D_4$ is not abelian.
\end{proof}

We will use Corollary \ref{t-1} to give an explicit
example.

\begin{example} \label{exa:biquadratic-1}
For any positive rational number $\alpha$, we can write $\alpha= 2^{s_1}17^{s_2}\cdot p_1^{e_1}\cdots p_g^{e_g}$, where $ p_1,\cdots, p_g$ are distinct primes which do not divide $34$, and $s_1$ and $s_2$ are integers, and $e_1, \cdots, e_g$ are nonzero integers. Let $\alpha_1=p_1^{e_1}\cdots p_g^{e_g}$. Let $D(\alpha)=\{p_1, \cdots, p_g \}$.
Denote
$$\aligned
& D_1=\{p\in D(\alpha): \left(\frac{17}{p}\right)=\left(\frac{-17}{p}\right)=-1 \}\cr
& D_2=\{p\in D(\alpha): \left(\frac{17}{p}\right)=\left(\frac{-1}{p}\right)=1 \text{ and } \left(\frac{17}{p}\right)_4=-1\},
\endaligned $$
where $\left(\frac{\ \cdot\ }{\ \cdot\ }\right)$ is the Legendre symbol and $\left(\frac{\ \cdot\ }{\ \cdot\ }\right)_4$ is the quartic residue symbol.
Let $K=\Q(\sqrt{-1},\sqrt{17})$. Then the equation
\begin{equation}\label{equ:biq-1}
N_{K/\Q}(\Xi)=\alpha
\end{equation}
is solvable over $\Q$ if and only if the following conditions hold:
\begin{enumerate}[(i)]
\item (locally solvable):
$\left(\frac{\alpha_1,-1}{2}\right)=\left(\frac{\alpha_1}{17}\right)=1$, where $\left(\frac{\cdot\ ,\ \cdot}{2}\right)$ is the Hilbert symbol over $\Q_2$; $e_i$ is even if  $\left(\frac{-1}{p_i}\right)=-1$ or $\left(\frac{17}{p_i}\right)=-1$.

\item (BM-obstruction): $(-1)^{s_1+\sum_{p_i\in D_1}e_i/2+\sum_{p_i\in D_2}e_i}=\left(\frac{\alpha_1}{17}\right)_4.$
\end{enumerate}
\end{example}
\begin{proof} First we consider the local solvability of (\ref{equ:biq-1}). It is clear that (\ref{equ:biq-1}) is solvable at the infinite place and any finite place $p\nmid 34\alpha$, and (\ref{equ:biq-1}) is solvable at place $2$ if and only if $\left(\frac{\alpha_1,-1}{2}\right)=1$, and (\ref{equ:biq-1}) is solvable at place $17$ if and only if $\left(\frac{\alpha_1}{17}\right)=1$. If $p=p_i$ with $\left(\frac{-1}{p_i}\right)=\left(\frac{17}{p_i}\right)=1$, then $K/\Q$ is totally split at $p$, then (\ref{equ:biq-1}) is solvable at $p$. If $p=p_i$ with $\left(\frac{-1}{p_i}\right)=-1$ or $\left(\frac{17}{p_i}\right)=-1$, then $K\otimes_\Q \Q_p$ is a product of two unramified extensions over $\Q_p$ of degree $2$, hence  (\ref{equ:biq-1}) is solvable at place $p_i$ if and only if $e_i$ is even.

Now we consider the Brauer-Manin obstruction. We assume (\ref{equ:biq-1}) is solvable at all places. Let $L=K(\sqrt[4]{17})$. Then $B=\Cor_{K/k}(\Xi, \chi)$ is the unique generator of $\Br(X^c)/\Br_0(X^c)$ by Corollary \ref{t-1}, where $\chi$ be the unique nontrivial character of $\Gamma_K$ which factors through $\Gal(L/K)$.
Let $p$ be a prime number (or $p=\infty$) and let $x_p = (\Xi_p) \in X(\Q_p)$ with $\Xi_p \in (K \otimes \Q_p)^*$.

If $p=\infty$, then $\chi_p$ is trivial since $K$ is totally imaginary, hence $B(x_p)=0$.

Let $p$ be a finite prime, we fix an embedding $\bar \Q \hookrightarrow \bar \Q_p$ and let $K'=\Q_p\cap K$. We can see $K'$ has the following cases:
\begin{itemize}
\item if $p=2$, then $K'=\Q(\sqrt{7})$.
\item if $p=17$, then $K'=\Q(\sqrt{-1})$.
\item if $p\nmid 34$, then $K/\Q$ is unramified at $p$. Since $\Gal(K/\Q)=\Z/2\Z\times \Z/2\Z$, $K'$ contains a biquadratic fields.
\end{itemize}
Therefore, we have $2\mid [K':\Q]$ for all $p<\infty$. By a similar arguments as in the proof of Theorem \ref{br}, in the following we will show that the restriction $\Res_{\Q/K'}(B)$ of $B$ to $\Br(X\times_\Q K')$ is the constant $(\alpha, \chi')$, where $\chi'$ is a character of $\Gamma_{K'}$ which lifts $\chi$.

Using the notation in  \cite[Proposition I.1.5.6]{NSW}, let $G=\Gamma_\Q,U=\Gamma_{K'},H=\Gamma_K\subset U$, we have the following decomposition $$G=\bigsqcup_\sigma U\sigma H=\bigsqcup_\sigma U\sigma,$$
where $\sigma$ runs through a finite system of representatives of the double cosets.

Then we have
$$\begin{aligned}
\Res_{\Q/K'}(B)&=\Res_{G/U}\circ\text{Cor}_{H/G}(\Xi,\chi)
=\sum_\sigma \text{Cor}_{U/H}(\sigma(\Xi),\chi^\sigma)\\
&=\sum_\sigma \text{Cor}_{U/H}(\sigma(\Xi),\chi) \, ,
\end{aligned}$$
where the last equation holds since $\chi=\chi^\sigma$ (note that $\chi$ has order $2$).
Since $2\mid [K':\Q]$ and $\Gal(L/\Q)=\D_4$, hence $\Gal(L/K')$ is abelian. Therefore we can lift $\chi$ to be a character $\chi'$ of $\Gamma_{K'}$ factoring through $\Gal(L/K')$ and we have
\begin{equation}\label{for:exm-1}
\begin{aligned}
\Res_{\Q/K}(B)&=\sum_\sigma \text{Cor}_{V/U}(\sigma(\Xi),\Res_{U/V}(\chi'))=\sum_\sigma (N_{K/K'}(\sigma(\Xi)),\chi')\\
&=(N_{L/\Q}(\Xi),\chi')=(\alpha, \chi').
\end{aligned}
\end{equation}
Hence we have $B(x_p)=0$ for $p \neq \infty$ and $p\nmid 34\alpha$.

If $p=2$, then $K'=\Q(\sqrt{17})$. We can choose $\chi'$ to be the generator of $H^1(K'(\sqrt[4]{17})/K',\Q/\Z)\subset H^1(K',\Q/\Z)$. However $x^4=17$ is solvable in $\Q_2$. Then we have $B(x_2)=(\alpha, \chi')_2=0$.

If $p=17$, then $K'=\Q(\sqrt{-1})$. We can choose $\chi'$ to be the generator of $H^1(K'(\sqrt[4]{17})/K',\Q/\Z)\subset H^1(K',\Q/\Z)$. Then we have $B(x_{17})=(\alpha, \chi')_{17}=(-1)^{s_1}\left(\frac{\alpha_1}{17}\right)_4$.

If $p=p_i$ with $\left(\frac{17}{p_i}\right)=1$, then $K'\supset \Q(\sqrt{17})$. We can choose $\chi'$ to be the generator of $H^1(K'(\sqrt[4]{17})/K',\Q/\Z)\subset H^1(K',\Q/\Z)$, it has degree $2$. Then $$B(x_{p_i})=(\alpha, \chi')_{p_i}=\left(\frac{\alpha, \sqrt{17}}{p_i}\right)=\left(\frac{17}{p_i}\right)^{e_i}_4.$$
We will discuss it by the following cases:
\begin{itemize}
\item if $\left(\frac{-1}{p_i}\right)=-1$, then $e_i$ is even by local solvability, hence $B(x_{p_i})=\left(\frac{17}{p_i}\right)^{e_i}_4=\left(\frac{17}{p_i}\right)^{e_i/2}=1.$
\item if $\left(\frac{-1}{p_i}\right)=1$ and $\left(\frac{17}{p}\right)_4=1$, obviously $B(x_{p_i})=1$.
\item if $\left(\frac{-1}{p_i}\right)=1$ and $\left(\frac{17}{p}\right)_4=-1$, obviously $B(x_{p_i})=\left(\frac{17}{p_i}\right)^{e_i}_4=(-1)^{e_i}.$
\end{itemize}

If $p=p_i$ with $\left(\frac{17}{p_i}\right)=-1$, then $e_i$ is even by local solvability. We have $\left(\frac{-1}{p_i}\right)=1$ or $\left(\frac{-17}{p_i}\right)=1$.
We will discuss it by the following cases:
\begin{itemize}
\item if $\left(\frac{-1}{p_i}\right)=1$, then $K'=\Q(\sqrt{-1})$. We can choose $\chi'$ to be a generator of $H^1(K'(\sqrt[4]{17})/K',\Q/\Z)\subset H^1(K',\Q/\Z)$, it is of degree $4$. Hence $B(x_{p_i})=\left(\frac{17}{p_i}\right)^{e_i}_4=\left(\frac{17}{p_i}\right)^{e_i/2}=(-1)^{e_i/2}$.
\item if $\left(\frac{-17}{p_i}\right)=1$, then $K'=\Q(\sqrt{-17})$. Since $\sqrt{17}=\sqrt{-1}\cdot \sqrt{-17}=2\sqrt{-17}\cdot(1+\sqrt{-1})^2/4$,
we have $L=K'(\sqrt{-1},\sqrt{2\sqrt{-17}})$. Therefore we can choose $\chi'$ to be contained in $H^1(L/K',\Q/\Z)\subset H^1(K',\Q/\Z)$, it is of degree $1$ or $2$. Since $e_i$ is even, we have $B(x_{p_i})=(\alpha, \chi')_{p_i}=(p_i^{e_i}, \chi')_{p_i}=1.$
\end{itemize}
By the above computation, we have
$$
 B(x_{p})=\begin{cases}
 (-1)^{s_1}\left(\frac{\alpha_1}{17}\right)_4 & \text{ if } p=17\\
 (-1)^{e_i/2} & \text{ if } p=p_i\in D_1\\
 (-1)^{e_i} & \text{ if } p=p_i\in D_2\\
 1 & \text{ otherwise.}
\end{cases}
$$
The proof follows from \cite[Theorem 8.12]{San82} or \cite[Chapter 5]{Sko01}.
\end{proof}

 Let $k$ be a field with characteristic $0$. Let $K=k(\sqrt{d_1},\sqrt{d_2})$ be a biquadratic extension of $k$. Suppose $x^2-d_1y^2=d_2z^2$ has a nonzero solution $(x_0,y_0,z_0)$ in $k$.
Let $L=K(\sqrt{x_0+y_0\sqrt{d_1}})$. We can see $L/k$ is Galois (non-abelian) and of degree $8$ with $\Gal(L/k)\cong \D_4$. Let $\chi$ be the unique nontrivial character of $\Gal(\bar k/K)$ which factors through $\Gal(L/K)$. Similarly  as above we immediately have the following result:

\begin{corollary}\label{t-2}  Let $k$ be a field with characteristic $0$. Let $K=k(\sqrt{d_1},\sqrt{d_2})$ be as above. Then $\Cor_{K/k}(\Xi, \chi)$ is the unique
generator of $\Br(X^c)/\Br_0(X^c)$.
\end{corollary}

Finally we will use Corollary \ref{t-2} to give an explicit
example.
\begin{example} \label{exm:biquadratic-2}
For any nonzero rational number $\alpha$, we can write $\alpha=(-1)^{s_0}2^{s_1} 13^{s_2}17^{s_3} p_1^{e_1}\cdots p_g^{e_g}$, where $ p_1,\cdots, p_g$ are distinct primes which do not divide $442$, and $s_0,\cdots,s_2$ are integers, and $e_1, \cdots, e_g$ are nonzero integers. Let $D(\alpha)=\{p_1, \cdots, p_g \}$ and $\alpha_1=p_1^{e_1}\cdots p_g^{e_g}$.  Let $K=\Q(\sqrt{13},\sqrt{17})$.
Denote
$$\aligned
& D_1=\{p\in D(\alpha): \left(\frac{13}{p}\right)=\left(\frac{17}{p}\right)=-1 \}\cr
& D_2=\{p\in D(\alpha): \left(\frac{13}{p}\right)=\left(\frac{17}{p}\right)=1 \text{ and } \left(\frac{15+4\sqrt{13}}{p}\right)=-1\}.
\endaligned $$
Let $K=\Q(\sqrt{13},\sqrt{17})$. Then the equation
$N_{K/\Q}(\Xi)=\alpha$ is solvable over $\Q$ if and only if the following conditions hold:
\begin{enumerate}[(i)]
\item (locally solvable):
$s_1$ is even; and $\left(\frac{\alpha_1}{13}\right)=\left(\frac{\alpha_1}{17}\right)=1$; and $e_i$ is even if  $\left(\frac{13}{p_i}\right)=-1$ or $\left(\frac{17}{p_i}\right)=-1$.

\item (BM-obstruction): $(-1)^{\sum_{p_i\in D_1}e_i/2+\sum_{p_i\in D_2}e_i}= (-1)^{s_0+s_2}\cdot \left(\frac{\alpha_1,-1}{2}\right)$, where $\left(\frac{\cdot\ ,\ \cdot}{2}\right)$ is the Hilbert symbol over $\Q_2$.
\end{enumerate}
\end{example}
\begin{proof} We can see $15^2-13\cdot 4^2=17$. Let $L=K(\sqrt{15+4\sqrt{13}})$. Then $B=\Cor_{K/k}(\Xi, \chi)$ is the unique generator of $\Br(X^c)/\Br_0(X^c)$ by Corollary \ref{t-1}, where $\chi$ be the unique nontrivial character of $\Gamma_K$ which factors through $\Gal(L/K)$.
The proof follows from a similar argument as in Example \ref{exa:biquadratic-1}.
\end{proof}

\begin{remark}
\begin{enumerate}[(a)]
\item By the Poitou-Tate duality, for any torus $T$, if the Tate-Shafarevich group $\Sh^2(\widehat T)$ is trivial, then the Hasse principle holds on any principal homogeneous space  of $T$. For the two examples as above, since $\Sh^2(\widehat T)=\Z/2\Z$ is non-trivial, the Hasse principle does not hold.

\item Let $k$ be a number field and $K=k(\sqrt{a},\sqrt{b})$ a biquadratic extension over $k$. Sansuc (\cite[Proposition 6]{San82}) gave a method to determine the solvability of the equation $N_{K/k}(\Xi)=\alpha\in k^\times$. For each $\alpha\in k^\times$, his method needs to look for a rational solution $(\Xi_1,\Xi_2)\in k(\sqrt{a})^\times \times k(\sqrt{b})^\times$ of the equation $$N_{k(\sqrt{a})/k}(\Xi_1)\cdot N_{k(\sqrt{b})/k}(\Xi_2)=\alpha.$$
Generally, looking for a rational point is not easy. By our method,  the unramified Brauer groups for all $\alpha\in k^\times$ have the same form, then the solvability for any $\alpha$ can be uniformly tested, e.g., we only need to compute some Legendre or Hilbert symbols for all $\alpha$ in Examples \ref{exa:biquadratic-1} and \ref{exm:biquadratic-2}. On the other hand, our method can also work on any Galois extension (not only biquadratic), see Example \ref{exa:cyclotomic} for a cyclotomic case.
\end{enumerate}
\end{remark}

\section{The case that $k=\Q$ and $K/\Q$ is biquadratic} \label{biquadratic}
In \S\ref{r-double}, we will recall some results of double coverings of $\Q^{ab}/\Q$. In \S\ref{construction over Q}, an explicit construction for the biquadratic case will be given using Theorem \ref{br} in \S\ref{main theorem} and the double coverings in \S\ref{r-double}.
\subsection{Double coverings of $\Q^{ab}/\Q$}\label{r-double}
Suppose $K/F$ is Galois. A double covering of $K/F$ (defined in \cite{Das}) is an extension $\hat{K}/K$ of degree $\leq 2$ such that $\hat{K}/F$ is Galois. Let $\Q^{ab}$ be the maximal abelian extension of $\Q$. In the following we will describe all double coverings of $\Q^{ab}/\Q$ (see \cite{And,Das}) and of the cyclotomic extension $\Q(\xi_n)/\Q$ (see \cite{YZ}).

Let $\mathcal A$ be the free abelian group on the symbols of the form $[a] (a\in \Q)$ modulo the identifications
$$[a]=[b]\Leftrightarrow a-b\in \Z.$$
For all odd primes $p<q$, define $\bold a_{pq}\in \mathcal A$ as
$$\bold a_{pq}=\sum_{i=1}^{\frac{p-1}{2}}\left(\left[\frac{i}{p}\right]-\sum_{k=0}^{\frac{q-1}{2}}\left[\frac{i}{pq}+\frac{k}{q}\right]\right)-
\sum_{j=1}^{\frac{q-1}{2}}\left(\left[\frac{j}{q}\right]-\sum_{l=0}^{\frac{p-1}{2}}\left[\frac{j}{pq}+\frac{l}{p}\right]\right)$$
and for prime $q>2$, define $\bold a_{2q}\in \mathcal A$ as
$$\bold a_{2q}=\left( \left[\frac{1}{4}\right]-\sum_{k=0}^{\frac{q-1}{2}}\left[\frac{k}{q}+\frac{1}{4q}\right]\right)-
\sum_{j=1}^{\frac{q-1}{2}}\left(\left[\frac{j}{q}\right]+\left[\frac{j}{q}-\frac{1}{2q}\right]-\left[\frac{j}{2q}\right]-\left[\frac{j}{2q}-\frac{1}{4q}\right]\right).$$

Let $$\sin: \mathcal A\rightarrow {\Q^{ab}}^\times$$ be the unique homomorphism such that
$$\sin[a]=\begin{cases}2\sin(\pi a) (=\mid 1-e^{2\pi i a}\mid)\ &\text{ if }0<a<1 \\ 1 &\text{ if }a=0 \end{cases}(a\in \Q\cap [0,1)).$$
The composite field of all double coverings of $\Q^{ab}/\Q$ is (see \cite[main theorem]{And})
 $$\Q^{ab}\left(\{\sqrt[4]{l}\}_{l:\text{prime}}\bigcup\{\sqrt{\sin \bold a_{pq}}\}_{\substack{p,q:\text{prime}\\ p<q}}\right).$$

Now we come to the cyclotomic case. First we give some notation.
Let $n\not\equiv 2 \mod 4$, we define a subset $S_n$ of $\Z$ associated to $n$  as following:
\begin{enumerate}[i)]
\item if $2\nmid n$, set $S_n:= \{\text {odd prime factors of } n\}$;

\item if $4\mid n$ and $8\nmid n$, set $S_n:= \{-1\}\cup \{\text {odd prime factors of } n\}$;

\item if $8\mid n$, set $S_n:=\{-1,2\}\cup \{\text {odd prime factors of } n\}$.
\end{enumerate}

 If $4\mid n$, then for all $p,q\in S_n$ and $p< q$, we set
$$u_{pq}:=\begin{cases}\sqrt{q} &\text{ if } p=-1\\
\sin \bold a_{pq} &\text{ otherwise}.
\end{cases}$$

If $2\nmid n$, then for primes $p,q\in S_n$ and $p<q$, we set
\begin{equation} \label{def-u} u_{pq}:=\begin{cases} \sin \bold a_{pq} &\text{ if } p\equiv q \equiv 1 \mod 4 \\ \sqrt{p}\cdot \sin \bold a_{pq} &\text{ if } p\equiv 1, q \equiv 3 \mod 4 \\ \sqrt{q}\cdot \sin \bold a_{pq} &\text{ if } p\equiv 3,q \equiv 1 \mod 4  \\ \sqrt{pq}\cdot \sin\bold a_{pq} &\text{ if } p\equiv q \equiv 3
 \mod 4,\end{cases}
 \end{equation}

Let $K=\Q(\xi_n)$, where $\xi_n$ is a primitive root of unity. Then the composite field of all double coverings of $K/\Q$ is (see \cite[Thoerem 1]{YZ})  $$K(\{\sqrt{u_{pq}}\}_{p<q\in S_n}).K',$$
where $K'=K(\{\sqrt{-1}\}\bigcup\{\sqrt{l}\}_{_{l:\text{prime}}})$.

\subsection{Construction of the Brauer group}\label{construction over Q}
Let $K=\Q(\sqrt{d_1 d_2},\sqrt{d_1d_3})$ such that $d_1,d_2,d_3\in \Z$ are square-free and
relatively prime to each other. Without loss generality, we can assume $d_1d_2>0$. In this section, we will explicitly construct the unramified Brauer group of the affine variety $X$ over $\Q$ defined by $N_{K/k}(\Xi)=\alpha$, where $\alpha\in \Q^\times$ fixed.

Denote
\begin{equation}\label{definition-T}
\aligned &S_i=\{p \text{ rational prime}: p\mid d_i\} \text{ for }1\leq i\leq 3,\\
& R=\bigcup_{1\leq i<j\leq 3}S_{i}\times S_{j},\\
&N=\begin{cases} |d_1 d_2 d_3|\ &\text{ if } d_1d_2\equiv d_1d_3\equiv 1 \mod 4\\
4|d_1 d_2 d_3|\ &\text{ otherwise}.
\end{cases}
\endaligned
\end{equation}

Let $F=\Q(\xi_N)$. It is clear that $K$ is contained in the cyclotomic field $F$. For simplicity of the notation,
we extend the definition of $\bold a_{pq}$ and $u_{pq}$ for $p> q$ by
$$\bold a_{pq}=\bold a_{qp} \text{ and } u_{pq}=u_{qp}.$$
Let $$\aligned &\Delta=\begin{cases}\prod_{(p,q)\in R}\sin \bold a_{pq}\ &\text{ if } d_1d_3>0\\
\sqrt{d_1d_2} \prod_{(p,q)\in R}\sin \bold a_{pq}\ &\text{ if } d_1d_3<0,
\end{cases}\\
&L=F(\sqrt{\Delta}).\endaligned$$

We will see the generator of $\Br(X^c)/\Br_0(X^c)$ is constructed by a character associated to $L/K$.

\begin{lemma} \label{gal} The field extension $L/\Q$ is Galois and $L\not \subset \Q^{ab}$.
\end{lemma}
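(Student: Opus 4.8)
The plan is to prove two things: that $L/\Q$ is Galois, and that $L \not\subset \Q^{ab}$. The first is structural and should follow from the theory built up in the previous section, while the second is where the genuine arithmetic content lies.

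For the Galois claim, I would first recall that $L = F(\sqrt{\Delta})$ with $F = \Q(\xi_N)$ abelian over $\Q$, so $[L:F] \le 2$. Thus $L/F$ is at most quadratic, and to show $L/\Q$ is Galois it suffices to show that $\sigma(L) = L$ for every $\sigma \in \Gamma_\Q = \Gal(\bar\Q/\Q)$. Since $F/\Q$ is Galois, $\sigma(F) = F$, so the question reduces to showing $\sigma(\sqrt{\Delta})$ differs from $\sqrt{\Delta}$ by an element of $F$, i.e. that $\sigma(\Delta)/\Delta$ is a square in $F^\times$ (equivalently that $L = F(\sqrt{\Delta})$ is one of the double coverings of $F/\Q$ in the sense recalled in \S\ref{r-double}). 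I would verify this by inspecting the explicit definition of $\Delta$: the factors $\sin \bold a_{pq}$ are the generators of the double coverings of the cyclotomic field $\Q(\xi_N)/\Q$ by \cite[Theorem 1]{YZ}, and the possible prefactor $\sqrt{d_1 d_2}$ lies in $K \subset F$ already. Hence $\Delta$ is, up to squares in $F^\times$, a product of the $u_{pq}$ (and square factors from $F$), so $F(\sqrt{\Delta})$ is contained in the composition field of all double coverings of $F/\Q$, which is Galois over $\Q$ by construction. Therefore $L/\Q$ is Galois.

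For the claim $L \not\subset \Q^{ab}$, the point is that $L$ is a genuine (nonabelian) double covering, not an abelian extension of $\Q$. I would argue by contradiction: if $L \subset \Q^{ab}$, then $\Delta$ would be, modulo squares in ${\Q^{ab}}^\times$, trivial in the sense that $\sqrt{\Delta} \in \Q^{ab}$. By the description of the composition field of all double coverings of $\Q^{ab}/\Q$ from \cite[main theorem]{And}, namely $\Q^{ab}(\{\sqrt[4]{l}\}_{l} \cup \{\sqrt{\sin \bold a_{pq}}\}_{p<q})$, the elements $\sqrt{\sin \bold a_{pq}}$ are genuinely not in $\Q^{ab}$; more precisely the classes $[\sqrt{\sin \bold a_{pq}}]$ are nontrivial and independent in the relevant $\F_2$-vector space $\Q^{ab,\times}/(\Q^{ab,\times})^2$ modulo the classes coming from $\Q^{ab}$ itself. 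Since $\Delta$ contains at least one factor $\sin \bold a_{pq}$ with $(p,q) \in R$ (the set $R$ being nonempty because the $d_i$ have prime factors, so that $K/\Q$ is a genuine biquadratic extension), the class of $\Delta$ is nontrivial in this quotient, forcing $\sqrt{\Delta} \notin \Q^{ab}$, hence $L \not\subset \Q^{ab}$.

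The main obstacle I anticipate is the second part: one must show that the specific product $\Delta$ does not become a square in $\Q^{ab}$ after all the factors combine. This is not automatic, since cancellation among the $\sin \bold a_{pq}$ could in principle occur, and the possible prefactor $\sqrt{d_1 d_2}$ (which \emph{is} in $\Q^{ab}$) changes the class only by something abelian and so cannot kill the nonabelian part. I would handle this by invoking the linear independence of the classes $\{\sqrt{\sin \bold a_{pq}}\}$ established in \cite{And,YZ}: because these generate linearly disjoint quadratic pieces over $\Q^{ab}$, the product $\Delta$ is a square in $\Q^{ab}$ if and only if the multiset of indices $(p,q)$ appearing in $\Delta$ is empty modulo $2$, which fails since $R \neq \emptyset$. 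Care is needed to confirm that $R$ is indeed nonempty under the standing hypotheses that $d_1,d_2,d_3$ are square-free, pairwise coprime, and give a genuine biquadratic $K$; granting this, the independence statement closes the argument.
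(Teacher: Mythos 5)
Your overall strategy coincides with the paper's: identify $L=F(\sqrt{\Delta})$ as one of the double coverings of the cyclotomic field $F=\Q(\xi_N)$ classified in \cite{YZ} (or \cite{Das} when $4\mid N$), and deduce both assertions from that classification. The second half of your argument ($L\not\subset\Q^{ab}$ via the independence of the classes of the $\sin\mathbf{a}_{pq}$ over $\Q^{ab}$, together with $R\neq\emptyset$) is sound and matches the paper's appeal to \cite[Proposition 1]{YZ}. But the first half has a genuine gap exactly where the arithmetic content sits. You assert that ``$\Delta$ is, up to squares in $F^\times$, a product of the $u_{pq}$ (and square factors from $F$)'', treating the prefactor $\sqrt{d_1d_2}$ as harmless because it ``lies in $K\subset F$ already''. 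Lying in $F$ is not the relevant property: multiplying $\Delta$ by an element of $F^\times$ that is not a \emph{square} in $F^\times$ changes the field $F(\sqrt{\Delta})$. Likewise the discrepancy between $\prod\sin\mathbf{a}_{pq}$ and $\prod u_{pq}$ is a product $\prod_p\sqrt{p}^{\,e_p}$ of square roots of primes which, when $4\nmid N$, need not even lie in $F$ (for $p\equiv 3\bmod 4$ only $\sqrt{-p}$ lies in $\Q(\xi_N)$), let alone be squares there. Whether this product, combined with the prefactor, is $\pm$ a square in $F^\times$ is precisely what must be checked, and it is why $\Delta$ is defined \emph{with} the factor $\sqrt{d_1d_2}$ only when $d_1d_3<0$: the paper's proof consists of computing the parities of the exponents $e_p$ (using $d_1d_2>0$, the sign of $d_1d_3$, and $d_1d_2\equiv d_1d_3\equiv 1\bmod 4$) to conclude $\Delta'=\pm\Delta\cdot u^2$ with $u\in F^\times$, after which \cite[Theorem 1 and Proposition 1]{YZ} apply. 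Without that computation the Galois claim is unproved, and if the prefactor were dropped in the case $d_1d_3<0$ the conclusion would in general fail.

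A secondary slip: from ``$F(\sqrt{\Delta})$ is contained in the composite $M$ of all double coverings, and $M/\Q$ is Galois'' you conclude ``therefore $L/\Q$ is Galois''. Containment in a Galois extension does not imply being Galois. The inference can be rescued because $\Gal(M/F)$ is central in $\Gal(M/\Q)$ (the order-two Galois group of each double covering is normal, hence central, in its Galois group over $\Q$), so every quadratic extension of $F$ inside $M$ is automatically Galois over $\Q$; but you should either say this or, better, carry out your first plan of verifying directly that $\sigma(\Delta)/\Delta\in(F^\times)^2$ for all $\sigma$. Either way, the containment (equivalently, the Galois-stability of the class of $\Delta$ in $F^\times/(F^\times)^2$) is exactly the unproved computation identified above.
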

\begin{proof}
If $4\mid N$, it follows from Theorem 11 and 12 in \cite{Das}. So we only need to consider the case $4\nmid N$, $i.e.$,  $d_1d_2\equiv d_1d_3\equiv 1 \mod 4$.

Let $\Delta'=\prod_{(p,q)\in R}u_{pq}$. 
By an easy computation, we have $$\Delta'=\prod_{(p,q)\in R}\sin \bold a_{pq}\prod_{p\mid d_1d_2d_3}\sqrt{p^{e_p}},$$
where $$e_p=\begin{cases}\#\{q|d_2d_3: q\equiv 3 \mod 4\} \ &\text{ if }p\mid d_1\\
\#\{q|d_1d_3: q\equiv 3 \mod 4\} \ &\text{ if }p\mid d_2\\
\#\{q|d_1d_2: q\equiv 3 \mod 4\} \ &\text{ if }p\mid d_3.
\end{cases}$$
Since $d_1d_2>0$, it implies $e_p$ is even when $p\mid d_3$.

If $d_1d_3>0$, we also have $d_2d_3>0$. Then $e_p$ is even when $p\mid d_1 d_2$. So $\Delta'=\pm \Delta \cdot u^2$ with $u\in F^\times$. Therefore $L=F(\sqrt{\Delta})=F(\sqrt{\Delta'})\text{ or } F(\sqrt{-\Delta'})$. Then $L/\Q$ is Galois and $L\not \subset \Q^{ab}$ by \cite[Theorem 1 and Proposition 1]{YZ}.

If $d_1d_3<0$, we also have $d_2d_3<0$. Then $e_p$ is odd when $p\mid d_1 d_2$. So $\Delta'=\pm \Delta\cdot u^2$ with $u\in F^\times$. Therefore $L=F(\sqrt{\Delta})=F(\sqrt{\Delta'})\text{ or } F(\sqrt{-\Delta'})$. Then $L/\Q$ is Galois and $L\not \subset \Q^{ab}$ by \cite[Theorem 1 and Proposition 1]{YZ}.
\end{proof}

\begin{theorem} Let $X$ be the affine variety over $\Q$ defined by $N_{K/\Q}(\Xi)=\alpha\in \Q^\times$. There is a character $\chi$ of $\Gamma_K$ which factors through $\Gal(L/K)$ and  nontrivial on $\Gal(L/F)$. Such $\chi$ gives an element $\Cor_{K/\Q}(\Xi, \chi)$ in $\Br(X^c)$ which generates $\Br(X^c)/\Br_0(X^c)$.
\end{theorem}
\begin{proof}
First we will show $L/K$ satisfies the condition $(*)$ over $\Q$, hence $L/K$ is abelian, then there exists a character $\chi$ of $\Gamma_K$ which factors through $\Gal(L/K)$ and  nontrivial on $\Gal(L/F)$. By Theorem \ref{br}, we have $\Cor_{K/\Q}(\Xi, \chi) \in \Br(X^c)$.

Let $K'$ be a subfield of $K$ such that $K/K'$ is cyclic.  We want to show $\Gal(L/K')$ is abelian. The extension $\Q^{ab}(\sqrt{\Delta})/K'$ is Galois by the main theorem in \cite{And}. Since $\Gal(L/K')$ is a quotient of $\Gal(\Q^{ab}(\sqrt{\Delta})/K')$, we only need to show $\Gal(\Q^{ab}(\sqrt{\Delta})/K')$ is abelian.
Let $G^{ab}=\Gal(\Q^{ab}/\Q)$. There is the central extension
\begin{equation*}\Sigma: 0\rightarrow \Z/2\Z\ \rightarrow \Gal(\Q^{ab}(\sqrt{\Delta})/\Q) \rightarrow G^{ab}\rightarrow 0.
\end{equation*}
Let $$\vartheta: G^{ab}\rightarrow \Gal(\Q^{ab}(\sqrt{\Delta})/\Q)$$ be any set-theoretic splitting of the central extension $\Sigma$. Then $\Sigma$ gives a $2$-cocycle $$a\in Z^2(G^{ab},\Z/2\Z)$$ defined by the formula $$a_{\sigma,\tau}=\vartheta(\sigma)\vartheta(\tau)\vartheta(\sigma\tau)^{-1}\ \   (\sigma,\tau\in G^{ab}).$$
Let $\beta \in Z^2(G^{ab},\Z/2\Z)$ defined by $$\beta_{\sigma,\tau}=a_{\sigma,\tau}-a_{\tau,\sigma}\ \  (\sigma,\tau\in \Gal(\Q^{ab}/K')).$$
It's easy to check $\beta$ is a skew-symmetric (symmetric) bilinear map.

For each odd prime $p$, let $G_p\subset G^{ab}$ be the inertia subgroup at $p$, which is isomorphic to $\Z_p^\times$. Let $G_{-1}\subset G^{ab}$ be the subgroup generated by the restriction of complex conjugation to $\Q^{ab}$. Let $G_2\subset G^{ab}$ be the subgroup of the inertial subgroup at $2$ fixing $\sqrt{-1}$, which is isomorphic to $1+4\Z_2$. Let $S=\{-1\}\cup\{ p\mid p \text{ is rational prime} \}$.
By Kronecker-Weber's theorem, we have $$G^{ab}=\prod_{p\in S}G_{p}.$$
For $p\in S$ the profinite group $G_p$ is procyclic. Let $\sigma_p\in G^{ab}$ such that $\sigma_p$ projects to a topological generator of $G_p$ and projects to $1\in G_q$ for $q\neq p$.
For $p\in S$, the natural quotient map $G^{ab}\rightarrow \Gal(\Q(\sqrt{p})/\Q)\cong \Z/2\Z$ induces the map $G_p \rightarrow \Z/2\Z$ by $\sigma_p^{i_p}\mapsto \bar i_p$.

(1) Suppose $d_1d_3>0$. Then $\Delta=\prod_{(p,q)\in R}\sin \bold a_{pq}$ by our definition.
By the Log wedge Formula in \S 4.3.4  and Proposition 2.9 in \cite{And}, we have
$$\beta=\sum_{(p,q)\in R}\delta_{p,q}\in Z^2(G^{ab},\Z/2\Z),$$
 where $\delta_{p,q}=\delta_{q,p}: G^{ab}\times G^{ab}\rightarrow \Z/2\Z$ is defined by $$((\sigma_l^{i_{l,1}})_{l\in S},(\sigma_l^{i_{l,2}})_{l\in S})\mapsto \bar i_{p,1}\bar i_{q,2}+\bar i_{p,2} \bar i_{q,1}$$
with $i_{-1,1},i_{-1,2}=0 \text{ or } 1$ and $i_{l,1},i_{l,2}\in \Z_l$ for $l\neq -1$,
 and see (\ref{definition-T}) for the definition of $R$.

(2) Suppose $d_1d_3<0$. Then $\Delta=\sqrt{d_1d_2}\prod_{(p,q)\in R}\sin \bold a_{pq}$.
By the Log wedge Formula in \S 3.4, \S 4.3.4 and Proposition 2.9 in \cite{And}, we have
$$\beta=\sum_{(p,q)\in R}\delta_{p,q}+\sum_{p\mid d_1d_2}\delta_{-1,p},$$
where $\delta_{-1,p}: G^{ab}\times G^{ab}\rightarrow \Z/2\Z$ is defined by $$((\sigma_l^{i_{l,1}})_{l\in S},(\sigma_l^{i_{l,2}})_{l\in S})\mapsto \bar i_{-1,1}\bar i_{p,2}+\bar i_{-1,2}\bar i_{p,1},$$
with $i_{-1,1},i_{-1,2}=0 \text{ or } 1$ and $i_{l,1},i_{l,2}\in \Z_l$ for $l\neq -1$.

We have the central extension
\begin{equation*}\Sigma_{K'}: 0\rightarrow \Z/2\Z\ \rightarrow \Gal(\Q^{ab}(\sqrt{\Delta})/K') \rightarrow \Gal(\Q^{ab}/K')\rightarrow 0
\end{equation*} Then $\Sigma_{K'}$ gives a $2$-cocycle $a'=\Res_{\Q/K'}(a) \in Z^2(\Gal(\Q^{ab}/K'),\Z/2\Z)$.
Let $$\beta_{\sigma,\tau}'=a'_{\sigma,\tau}-a'_{\tau,\sigma}\ \  (\sigma,\tau\in \Gal(\Q^{ab}/K')).$$ We can verify (see \cite[Lemma 2.8]{And}) that $\Gal(\Q^{ab}(\sqrt{\Delta})/K')$ is abelian if and only if
\begin{equation} \label{cond-abel} \beta_{\sigma,\tau}'=0 \text { for any } \sigma,\tau\in \Gal(\Q^{ab}/K').
\end{equation}
In the following we will check $\beta'$ satisfies (\ref{cond-abel}).

(i) Suppose $d_1d_3>0$. We can assume $K'=\Q(\sqrt{d_1d_2})$. The other cases are similar since $d_1d_2,d_1d_3$ and $d_2d_3$ are all positive.
Let $$g_1=(\sigma_p^{i_{p,1}})_{p\in S},g_2=(\sigma_p^{i_{p,2}})_{p\in S}\in \Gal(\Q^{ab}/K')\subset G^{ab}.$$
Since $g_1, g_2$ fix $K'$, we have $$\sum_{p\mid d_1 d_2}\bar i_{p,j}=\sum_{p\mid d_1}\bar i_{p,j}+\sum_{p\mid d_2}\bar i_{p,j}=0 \text{ for }j=1,2.$$
Therefore $$\aligned\beta_{g_1,g_2}'=&\beta_{g_1,g_2}=\sum_{(p,q)\in R}(\bar i_{p,1}\bar i_{q,2}+\bar i_{p,2} \bar i_{q,1})\\
=&\sum_{(p,q)\in S_{1}\times S_{2}}(\bar i_{p,1}\bar i_{q,2}+\bar i_{p,2}\bar i_{q,1})+\sum_{(p,q)\in S_{1}\times S_{3}}(\bar i_{p,1}\bar i_{q,2}+\bar i_{p,2}\bar i_{q,1})\\
&+\sum_{(p,q)\in S_{2}\times S_{3}}(\bar i_{p,1}\bar i_{q,2}+\bar i_{p,2}\bar i_{q,1})\\
=&\sum_{p\mid d_1}\bar i_{p,1} \sum_{p\mid d_2}\bar i_{p,2}+\sum_{p\mid d_1}\bar i_{p,1}\sum_{p\mid d_3}\bar i_{p,2}+\sum_{p\mid d_2}\bar i_{p,1}\sum_{p\mid d_3}\bar i_{p,2}\\
&+\sum_{p\mid d_1}\bar i_{p,2} \sum_{p\mid d_2}\bar i_{p,1}+\sum_{p\mid d_1}\bar i_{p,2}\sum_{p\mid d_3}\bar i_{p,1}+\sum_{p\mid d_2}\bar i_{p,2}\sum_{p\mid d_3}\bar i_{p,1}\\
= &-\sum_{p\mid d_2}\bar i_{p,1} \sum_{p\mid d_2}\bar i_{p,2}-\sum_{p\mid d_2}\bar i_{p,1}\sum_{p\mid d_3}\bar i_{p,2}+\sum_{p\mid d_2}\bar i_{p,1}\sum_{p\mid d_3}\bar i_{p,2}\\
&-\sum_{p\mid d_2}\bar i_{p,2} \sum_{p\mid d_2}\bar i_{p,1}-\sum_{p\mid d_2}\bar i_{p,2}\sum_{p\mid d_3}\bar i_{p,1}+\sum_{p\mid d_2}\bar i_{p,2}\sum_{p\mid d_3}\bar i_{p,1}\\
= &0.
\endaligned$$

(ii) Suppose $d_1d_3<0$.

(a) Suppose $K'=\Q(\sqrt{d_1d_2})$.
  Let $$g_1=(\sigma_p^{i_{p,1}})_{p\in S},g_2=(\sigma_p^{i_{p,2}})_{p\in S}\in \Gal(\Q^{ab}/K')\subset G^{ab}.$$
Since $g_1, g_2$ fix $K'$, we have $$\sum_{p\mid d_1 d_2}\bar i_{p,j}=0 \text{ for }j=1,2.$$
Similar as above one has $$\aligned\beta_{g_1,g_2}'=&\beta_{g_1,g_2}=\sum_{(p,q)\in R}(\bar i_{p,1}\bar i_{q,2}+\bar i_{p,2}\bar i_{q,1})
+\sum_{p\mid d_1d_2}(\bar i_{-1,1}\bar i_{p,2}+\bar i_{-1,2}\bar i_{p,1})\\
= & 0+\bar i_{-1,1}\sum_{p\mid d_1d_2}\bar i_{p,2}+\bar i_{-1,2}\sum_{p\mid d_1d_2}\bar i_{p,1}\\
= & 0.
\endaligned$$

(b) Suppose $K'=\Q(\sqrt{d_1d_3})$ (similar proof for  $K'=\Q(\sqrt{d_2d_3})$). Let $$g_1=(\sigma_p^{i_{p,1}})_{p\in S},g_2=(\sigma_p^{i_{p,2}})_{p\in S}\in \Gal(\Q^{ab}/K')\subset G^{ab}.$$
Since $g_1, g_2$ fix $K'$ and $d_1d_3<0$, we have $$\bar i_{-1,j}+\sum_{p\mid d_1 d_3}\bar i_{p,j}= 0 \text{ for }j=1,2.$$
Therefore $$\aligned\beta_{g_1,g_2}'=&\beta_{g_1,g_2}=\sum_{(p,q)\in R}(\bar i_{p,1}\bar i_{q,2}+\bar i_{p,2}\bar i_{q,1})+\sum_{p\mid d_1d_2}(\bar i_{-1,1}\bar i_{p,2}+\bar i_{-1,2}\bar i_{p,1})\\
=&\sum_{p\mid d_1}\bar i_{p,1} \sum_{p\mid d_2}\bar i_{p,2}+\sum_{p\mid d_1}\bar i_{p,1}\sum_{p\mid d_3}\bar i_{p,2}+\sum_{p\mid d_2}\bar i_{p,1}\sum_{p\mid d_3}\bar i_{p,2}\\
&+\sum_{p\mid d_1}\bar i_{p,2} \sum_{p\mid d_2}\bar i_{p,1}+\sum_{p\mid d_1}\bar i_{p,2}\sum_{p\mid d_3}\bar i_{p,1}+\sum_{p\mid d_2}\bar i_{p,2}\sum_{p\mid d_3}\bar i_{p,1}\\
&+\sum_{p\mid d_1d_2}(\bar i_{-1,1}\bar i_{p,2}+\bar i_{-1,2}\bar i_{p,1})\\
= &-(\bar i_{-1,1}+\sum_{p\mid d_3}\bar i_{p,1} )\sum_{p\mid d_2}\bar i_{p,2}-(\bar i_{-1,1}+\sum_{p\mid d_3}\bar i_{p,1})\sum_{p\mid d_3}\bar i_{p,2}+\sum_{p\mid d_2}\bar i_{p,1}\sum_{p\mid d_3}\bar i_{p,2}\\
&-(\bar i_{-1,2}+\sum_{p\mid d_3}\bar i_{p,2}) \sum_{p\mid d_2}\bar i_{p,1}-(\bar i_{-1,2}+\sum_{p\mid d_3}\bar i_{p,2})\sum_{p\mid d_3}\bar i_{p,1}+\sum_{p\mid d_2}\bar i_{p,2}\sum_{p\mid d_3}\bar i_{p,1}\\
&+\sum_{p\mid d_1d_2}(\bar i_{-1,1}\bar i_{p,2}+\bar i_{-1,2}\bar i_{p,1})\\
= &-\bar i_{-1,1}\sum_{p\mid d_2d_3}\bar i_{p,2}- \bar i_{-1,2}\sum_{p\mid d_2d_3}\bar i_{p,1}+\sum_{p\mid d_1d_2}(\bar i_{-1,1}\bar i_{p,2}+\bar i_{-1,2}\bar i_{p,1})\\
= &\bar i_{-1,1}\sum_{p\mid d_1d_3}\bar i_{p,2}+ \bar i_{-1,2}\sum_{p\mid d_1d_3}\bar i_{p,1}\\
= &-\bar i_{-1,1}\bar i_{-1,2}- \bar i_{-1,2}\bar i_{-1,1}= 0.
\endaligned$$
Therefore $L/K$ satisfies the condition $(*)$ over $\Q$, hence $\Cor_{K/\Q}(\Xi, \chi) \in \Br(X^c)$.

Since $\Br(X^c)/\Br_0(X^c)\cong H^3(\Gal(K/\Q),\Z)\cong \Z/2\Z$, we only need to show $\Cor_{K/\Q}(\Xi, \chi)$ is nontrivial. Recall $$F=\Q(\xi_N), K\subset F\subset \Q^{ab} \text{ and } L=F(\sqrt{\Delta}).$$ By  Proposition \ref{form}, we only need to show $\chi$ is not the restriction of a character of $\Gal(\bar \Q/\Q)$. We assume that $\chi$  is the restriction of a character $\Gal(\bar\Q/\Q)$, hence $\chi$ is trivial on the closure of the commutator subgroup of $\Gal(\bar \Q/\Q)$  which is just $\Gal(\bar\Q/\Q^{ab})$. Then $\chi$ is trivial on $\Gal(\bar\Q/\Q^{ab})$. On the other hand, $\chi$ factors through $\Gal(L/K)$. Therefore we have $\chi$ is trivial on $\Gal(\bar\Q/L\cap \Q^{ab})$. Note that $L/\Q$ is non-abelian (see Lemma \ref{gal}), hence $F=L\cap \Q^{ab}$. Therefore $\chi$ is trivial on $\Gal(\bar\Q/F)$, which is impossible since $\chi$ factors through $\Gal(L/K)$ and is nontrivial on $\Gal(L/F)$.
\end{proof}

For the cyclotomic case, the $2$-torsion part of $\Br(X^c)/\Br_0(X^c)$ can also be given by Theorem \ref{br}.
\begin{proposition}\label{cyc} Let $m\not \equiv 2 \mod 4$. Let $K=\Q(\xi_m)$ be a cyclotomic field and $L=K(\{\sqrt{u_{pq}}\}_{p<q\in S_m})$. Let $X$ be the affine variety over $\Q$ defined by $N_{K/\Q}(\Xi)=\alpha\in \Q^\times$. Then the 2-torsion subgroup of $\Br(X^c)/\Br_0(X^c)$ is generated by all $\Cor_{K/\Q}(\Xi, \chi),$ where $\chi$ runs through all characters in $\Hom(\Gal(L/K),\Q/\Z)\subset \Hom(\Gal(\bar \Q/K),\Q/\Z)$.
\end{proposition}
\begin{proof} Let $\chi$ be such a nontrivial character of $\Gal(\bar \Q/K)$. Then there is a subfield $L'\supset K$ of $L$ with $[L':K]=2$ such that $\chi$ factors through $\Gal(L'/K)$. And $L'/\Q$ is Galois and non-abelian by \cite[Theorem 1 and Proposition 1]{YZ}. That $L'/K$ satisfies the condition $(*)$ follows from the fact:

{\it suppose the group $M$ is cyclic and there is the following central extension $$0\rightarrow \Z/2\Z\ \rightarrow G \rightarrow M \rightarrow 0,$$

then $G$ is abelian.}\\
Indeed, let $M=<\beta>$ and $\Z/2\Z=<\gamma>$. Let $\hat \beta$ be a fixed lifting of $\beta$ in $G$, then we can see any element in $G$ has the form $\hat \beta^i \gamma^j$ with $i,j\in \Z$. Note that $\gamma$ is in the centre of $G$, obviously $G$ is abelian.
Therefore, $\Cor_{K/\Q}(\Xi, \chi)\in \Br(X^c)$ and is nontrivial by Theorem \ref{br}.

Let $d=\# S_m$. On the other hand, the Galois group $\Gal(L/K)\cong (\Z/2\Z)^{d(d-1)/2}$ by the linear independent $u_{pq}$ in $K^\times/{K^\times}^2$ (see \cite[Lemma 4]{YZ}). Therefore the subgroup of $\Br(X^c)/\Br_0(X^c)$ generated by all $\Cor_{K/\Q}(\Xi, \chi)$ is of 2-rank $d(d-1)/2$. Using the K\"{u}nneth formula (\cite[Chapter II, Section 1, Exercise 7]{NSW}), we can calculate that the 2-rank of $H^3(\Gal(K/k),\Z)$  is also $d(d-1)/2$.
Since $\Br(X^c)/\Br_0(X^c)\cong H^3(\Gal(K/k),\Z)$, the 2-rank of $\Br(X^c)/\Br_0(X^c)$  is $d(d-1)/2$.
Therefore all $\Cor_{K/\Q}(\Xi, \chi)$ generate the 2-torsion subgroup of $\Br(X^c)/\Br_0(X^c)$.
\end{proof}

Finally we will give an explicit
example associated to a cyclotomic field.
\begin{example} \label{exa:cyclotomic}
Let $K=\Q(\xi_{7\cdot 53})$. For any positive rational number $\alpha$, we can write $\alpha=2^{s_1}7^{s_2}53^{s_3} p_1^{e_1}\cdots p_g^{e_g}$, where $p_1,\cdots, p_g$ are distinct primes which do not divide $742$,  and $s_1, \cdots, s_3$ are integers, and $e_1, \cdots, e_g$ are nonzero integers.
Let $D(\alpha)=\{p_1, \cdots, p_g \}$.
Denote
$$\aligned
& D_1=\{p\in D(\alpha): \left(\frac{-7}{p}\right)=\left(\frac{53}{p}\right)=-1 \}\cr
& D_2=\{p\in D(\alpha): \left(\frac{-7}{p}\right)=\left(\frac{53}{p}\right)=1 \text{ and } \left(\frac{5+2\sqrt{-7}}{p}\right)=-1\}.
\endaligned $$
Then the equation
$N_{K/\Q}(\Xi)=\alpha$ is solvable over $\Q$ if and only if the following conditions hold:
\begin{enumerate}[(i)]
\item The equation $N_{K/\Q}(\Xi)=\alpha$ is solvable over $\Q_p$ for each place $p$.
\item (BM-obstruction): $(-1)^{\sum_{p_i\in D_1}e_i/2+\sum_{p_i\in D_2}e_i} = (-1)^{s_2}\cdot \left(\frac{\alpha,-1}{2}\right)$.
\end{enumerate}
\end{example}
\begin{proof}
Let $X$ be the affine variety defined by $N_{K/\Q}(\Xi)=\alpha\in \Q^\times$. We can see $$\Br(X^c)/\Br_0(X^c)\cong H^3(\Gal(K/\Q),\Z)\cong \Z/2\Z.$$

It is easy to verify that $K(\sqrt{5+2\sqrt{-7}})/\Q$ is Galois. Let $L=K(\sqrt{5+2\sqrt{-7}})$.
Let $\chi$ be the unique nontrivial character of $\Gal(\bar \Q/K)$ which factors through $\Gal(L/K)$.
Then $\Cor_{K/\Q}(\Xi,\chi)$ is the unique generator of $\Br(X^c)/\Br_0(X^c)$ by Theorem \ref{br}. The proof follows from a similar argument as in Example \ref{exa:biquadratic-1}.
\end{proof}

\bf{Acknowledgment} \it{The author would like to thank Professors Colliot-Th\'el\`ene and Derenthal for helpful discussions and comments. The work was supported by National Natural Science Foundation of China (Grant No. 11371210), the National Key Basic Research Programm (Grant No. 2013CB834202) and grant DE 1646/2-1 of the Deutsche Forschungsgemeinschaft. The author is grateful to the referee for careful reading and very helpful
comments.}

\begin{bibdiv}
\begin{biblist}

\bib{And} {article} {
    author={G.W. Anderson},
    title={Kronecker-Weber plus epsilon},
      journal={Duke Math. J.},
      publisher={},
       volume={114},
       date={2002},
       editor={},
       pages={439\ndash 475},
    number={}
 }

\bib{CT11} {article} {
    author={J.-L. Colliot-Th\'el\`ene},
    title={Groupe de Brauer non ramifi\'e d'espaces homog\`enes de tores},
  journal={J. Th\'eor. Nombres Bordeaux, to appear},
    volume={},
      date={},
    pages={},
    number={}
 }

\bib{CTK98} {article} {
    author={J.-L. Colliot-Th\'el\`ene},
    author={B.\`E. Kunyavskii},
 title={Groupe de Brauer non ramifi\'e  des espaces principaux homog\` enes de groupes lin\'eaires},
    journal={J. Ramanujan Math. Soc.},
    volume={13},
      date={1998},
    pages={37\ndash 49}
}

\bib{CT/S77-1} {article} {
    author={J.-L. Colliot-Th\'el\`ene},
    author={J.-J. Sansuc},
 title={La $R$-\'quivalence sur les tores},
    journal={Ann. Sci. \'Ecole Norm. Sup.},
    volume={10},
      date={1977},
    pages={175\ndash 229}
}

\bib{CT/S87-1} {article} {
    author={J.-L. Colliot-Th\'el\`ene},
    author={J.-J. Sansuc},
 title={Principal homogeneous spaces under flasque tori, applications},
    journal={Journal of Algebra},
    volume={106},
      date={1987},
    pages={148\ndash 205}
}

\bib{CTSD94} {article} {
    author={J.-L. Colliot-Th\'el\`ene},
    author={P. Swinnerton-Dyer},
 title={Hasse principle and weak approximation for pencils of Severi-Brauer and similar varieties},
  journal={J. reine angew. Math.},
    volume={453},
      date={1994},
    pages={49\ndash 112}
}

\bib{Das} {article} {
    author={P. Das},
    title={Algebraic Gamma monomials and double coverings of cyclotomic fields.},
      journal={Trans. Amer. Math. Soc.},
      publisher={},
       volume={352},
       date={2000},
       editor={},
       pages={3557\ndash 3594},
    number={}
 }

\bib{DW} {article} {
    author={C. Demarche},
     author={D. Wei},
    title={Hasse principle and weak approximation for multinorm equations},
      journal={Israel J. Math., to appear, arXiv:1212.5889},
      publisher={},
       volume={},
       date={},
       editor={},
       pages={},
    number={}
 }

 \bib{Ka}{book}{
    author={G. Karpilovsky},
     title={The Schur Multiplier},
       volume={ },
     publisher={London Mathematical Society Monographs 2},
     place={New-York: Oxford University Press},
      date={1987},
   journal={ },
    series={Perspectives in Mathematics},
    volume={},
    number={ },
}
 
\bib{Milne86}{book}{
    author={ J. S. Milne},
     title={Arithmetic duality theorems},
       volume={ },
     publisher={Academic Press},
     place={},
      date={1986},
   journal={ },
    series={Perspectives in Mathematics},
    volume={1},
    number={ },
}

\bib{NSW}{book}{
    author={ J. Neukirch},
    author={A. Schmidt},
    author={K. Wingberg},
    title={Cohomology of Number Fields},
    volume={323},
    publisher={Springer},
    series={Grundlehren},
    date={2000},
}

\bib{PR94}{book}{
    author={V. P. Platonov},
    author={A. S. Rapinchuk},
     title={Algebraic groups and  number theory},
     publisher={Academic Press},
     place={},
      date={1994},
    volume={ },
    number={ },
}

 \bib{San81} {article} {
    author={J.-J. Sansuc},
 title={Groupe de Brauer et arithm\'etique des groupes alg\'ebriques lin\'eaires sur un corps de nombres},
  journal={J. reine angew. Math.},
    volume={327},
      date={1981},
    pages={12\ndash 80},
    number={}
 }

\bib{San82} {article} {
    author={J.-J. Sansuc},
    title={\'A propos d'une conjecture arithm\'etique sur le groupe de Chow d'une surface
rationnelle},
      journal={S\'eminaire de th\'eorie des nombres de Bordeaux, 1981/1982, Exp. No. 33, 38 pp.
 Univ. Bordeaux I, Talence,1982},
      publisher={},
       volume={},
       date={},
       editor={},
       pages={},
    number={}
 }

\bib {Sko01}{book}{
    author={A.N. Skorobogatov},
     title={Torsors and rational points},
     publisher={Cambridge University Press},
     address={},
    series={Cambridge Tracts in Mathematics},
    volume={144},
    date={2001},
     pages={},
}

\bib{YZ} {article} {
    author={L. Yin},
    author={Q. Zhang},
    title={All double coverings of cyclotomic fields},
      journal={Math. Z.},
      publisher={},
       volume={253},
       date={2006},
       editor={},
       pages={479\ndash 488},
    number={}
 }

\end{biblist}
\end{bibdiv}

\end{document}